\documentclass[onecolumn,final]{elsart3p}
%\usepackage[sort&compress]{natbib}

%% usepackage declarations
\usepackage{graphicx,color}
\usepackage{amsmath}       %   AMS equation environments
\usepackage{amssymb}       %   AMS symbol fonts (e.g., \boldsymbol{}
\usepackage{epstopdf}

\usepackage{bm,bbm}
\usepackage{overpic}
\usepackage{rotating}
\usepackage{comment}
\usepackage{changes}
\usepackage[colorinlistoftodos,prependcaption,textsize=tiny]{todonotes}

%% math environments
\theoremstyle{plain}
\newtheorem{theorem}{Theorem}[section]
\newtheorem{lemma}[theorem]{Lemma}

\theoremstyle{definition}

\newtheorem{remark}[theorem]{Remark}

\theoremstyle{plain}

\newcommand{\pa} [1]{\textcolor{black}{#1}}
%%%%%%%%%%%%%%% colors (you can change the colors if you do not like)
\definecolor{MyDarkGreen}{rgb}{0,0.45,0}

%% le petit carr{\'e} de fin de d{\'e}monstration
\def\trait #1 #2 #3 {\vrule width #1pt height #2pt depth #3pt}
\def\fin{\hfill
        \trait .3 5 0
        \trait 5 .3 0
        \kern-5pt
        \trait 5 5 -4.7
        \trait 0.3 5 0
\medskip}

\newenvironment{proof}{\textit{Proof.}}{\fin}

%% new commands
\newcommand{\Tau}{\mathcal{T}}

\newcommand{\Eh} {\mathcal{E}_h}

\newcommand{\vrtx}{\mathsf{v}}
\newcommand{\bil}[2]{\langle#1,#2\rangle}

\newcommand{\V}{V}

%% used in the tri-harmonic section
\newcommand{\Vhr} [1]{V_{h,#1}}
\newcommand{\VhPr}[1]{V_{h,#1}(\P)}
\newcommand{\Vhrs}[1]{V_{h,#1}^*}

\newcommand{\Vhrp} [1]{V^{p}_{h,#1}}
\newcommand{\VhPrp}[1]{V^{p}_{h,#1}(\P)}

\newcommand{\tVhPrp}[1]{\tilde{V}^{p}_{h,#1}(\P)}

\newcommand{\Vht}[1]{\widetilde{V}_{h,#1}}
\newcommand{\VhtP}[1]{\widetilde{V}_{h,#1}(\P)}
%% \newcommand{\Vhr}    {V_{h,r}}

%\newcommand{\VhPr}   {V_{h,r}(\P)}

%% used in the poly-harmonic section
\newcommand{\Vh} {V_{h,r}^{p}}

%% norms, miscellaneous
\newcommand{\snorm}[2]{\left|#1\right|_{#2}}
\newcommand{\norm} [2]{\left|\!\left|#1\right|\!\right|_{#2}}

\newcommand{\normV} [1]{\left|\!\left|#1\right|\!\right|_{\V}}
\newcommand{\normVP}[1]{\left|\!\left|#1\right|\!\right|_{\V,\P}}

\newcommand{\PS}[1]{\mathbbm{P}_{#1}} %% Polynomial Space (PS)
\newcommand{\hE}{h_e}
\newcommand{\hP}{h_{\P}}

\newcommand{\fh}{f_h}

\renewcommand{\P}{K}
\newcommand{\Th}{\Omega_h}

\newcommand{\SP}{S^{\P}}

%% polinomial projection
\newcommand{\PiPr}[1]{\Pi^{\nabla,\P}_{#1}} %% elliptic projector, local
\newcommand{\Pizr}[1]{\Pi^{0,\P}_{#1}}  %% orthogonal projector, local

\newcommand{\EOD}{\end{document}}

\newcommand{\nx}{n_x}
\newcommand{\ny}{n_y}
\newcommand{\tx}{\tau_x}
\newcommand{\ty}{\tau_y}

\newcommand{\DIM}{\textrm{dim}}

\newcommand{\NP}{\mathcal{N}^{\P}}
\newcommand{\restrict}[2]{{#1}_{|#2}}

\newcommand{\btau}{\bm\tau}

\DeclareGraphicsRule{.tif}{png}{.png}{`convert #1 `dirname #1`/`basename #1 .tif`.png}

\begin{document}

% \nofiles
\begin{frontmatter}

  \title{The conforming virtual
    element method \\ for polyharmonic problems}

  \author[MOXA]{P.~F.~Antonietti}
  \author[T5]  {, G. Manzini}
  \author[MOXV]{, and M. Verani}

  \address[MOXA]{
    MOX, Dipartimento di Matematica,
    Politecnico di Milano, Italy;
    \emph{e-mail: paola.antonietti@polimi.it}
  }
  \address[T5]{
    Group T-5,
    Theoretical Division,%
    Los Alamos National Laboratory,
    Los Alamos, NM,
    USA;
    \emph{e-mail: gmanzini@lanl.gov}
  }
  \address[MOXV]{
    MOX, Dipartimento di Matematica,
    Politecnico di Milano, Italy;
    \emph{e-mail: marco.verani@polimi.it}
  }

  % Abstract
  % ----------------------------
  \begin{abstract}
\pa{In this work, we exploit the capability of virtual element methods in accommodating approximation spaces featuring high-order continuity to numerically approximate differential problems of the form $\Delta^p u =f$, $p\ge1$. More specifically,  we develop and analyze the conforming virtual element method for the numerical approximation of polyharmonic boundary value problems, and prove an abstract result that states the convergence of the method in the energy norm.}
  \end{abstract}

\medskip

  \begin{keyword}
    virtual element method,
    polytopal mesh,
    polyharmonic problem,
    high-order methods
  \end{keyword}

\end{frontmatter}

% Activate to display a given date or no date
\date{\today}

\maketitle

%% Introduction

\raggedbottom

\section{Introduction}
In the recent years, there has been a tremendous interest to numerical
methods that approximate partial differential equations (PDEs) on
computational meshes with arbitrarily-shaped polygonal/polyhedral
(\pa{polytopic}, for short) elements.
A nonexhaustive list of such methods include
the Mimetic Finite Difference method (see e.g., 
\cite{%HymanShashkovSteinberg_1997,%
  BreLipSha05,%
 % Gyrya-Lipnikov-Manzini:2016,%
  Brezzi-Buffa-Lipnikov:2009,%
  %BeiraodaVeiga-Lipnikov-Manzini:2011,%
 % BeiraodaVeigaManziniPutti_2015,%
  %Lipnikov-Manzini-Moulton-Shashkov:2016,%
  %Lipnikov-Manzini:2014,%
  AntoniettiBigoniVerani_2013,%
  BeiraoManziniLipnikov_2014,%
  %Lipnikov-Manzini-Shashkov:2014,%
  %Manzini-Lipnikov-Moulton-Shashkov:2017,%
  AntoniettiFormaggiaScottiVeraniVerzotti_2016}), 
the Polygonal Finite Element Method (see e.g., 
\cite{SukumarTabarraei_2004}),%
 % SukumarTabarraei_2008,%
  %Manzini-Russo-Sukumar:2014
%%
the polygonal Discontinuous Galerkin Finite Element Methods (see e.g., 
\cite{AntBreMar09,%
  Canetal14,%
  CangianiDongGeorgoulisHouston_2016,%
  AntoniettiHoustonet_al_2016,%
  %AntoniettiFacciolaRussoVerani_2016,%
  %%CangianiDongGeorgoulis_2017,%
 % AntoniettiPennesi_2017,%
 AntoniettiPennesiHouston_2018})
the Hybridizable Discontinuous Galerkin and Hybrid High-Order Methods (see e.g., 
\cite{%Cockburn-Gopalakrishnan-Lazarov:2009,%
  CockburnDongGuzman_2008,%
  DiPietroErnLemaire_2014}),
  %CockburnDiPietroErn_2016
%%
the Gradient Discretization method (see e.g., 
\cite{EymardGuichardHerbin_2012}.%
  %DroniouEymardGallouetHerbin_2013,%
  %DiPietro-Droniou-Manzini:2018
%%
%the Finite Volume Method
\cite{Droniou:2014}),
%%
%and the BEM-based FEM \cite{Weisser_basic}.  
%%
An alternative approach that \pa{is} also proved to be very successful is
provided by the Virtual Element method (VEM), which was originally
proposed in~\cite{BeiraodaVeigaBrezziCangianiManziniMariniRusso_2013} for the numerical treatment of
second-order elliptic problems~\cite{CangianiManziniRussoSukumar_2015,CangianiGyryaManziniSutton_2017}, and readily extended to
%%
%inear and nonlinear elasticity~\cite{VEMelasticity,GainTalischiPaulino_2014},
%plate bending problems~\cite{BrezziMarini_2013},
Cahn-Hilliard equation~\cite{AntoniettiBeiraoScacchiVerani_2016},
Stokes equations~\cite{AntoniettiBeiraoMoraVerani_2014},
Laplace-Beltrami equation~\cite{SVEMbasic},
Darcy-Brinkam equation~\cite{Vacca:2018},
discrete topology optimization problems~\cite{AntoniettiBruggiScacchiVerani_2017},
fracture networks problems~\cite{BenedettoBerronePieracciniScialo_2014},
eigenvalue problems~\cite{GardiniVacca_2017}. %,CertikGardiniManziniVacca_2018}.
The mixed virtual element formulation was proposed in~\cite{
  BrezziFalkMarini_2014};
  %,%BeiraoBrezziMariniRusso_2016} 
  the nonconforming Virtual element
formulations was proposed for second-order elliptic problems
in~\cite{AyusoLipnikovManzini_2016}, and later extended to general
advection-reaction-diffusion problems, Stokes equation, the biharmonic
problems, the \pa{eigenvalue} problems, and the Schrodinger equation in~\cite{%
  %BerroneBorioManzini_2018, 
  CangianiGyryaManzini_2016,%
  CangianiManziniSutton_2017,%
  ZhaoChenZhang_2016,%
  GardiniManziniVacca_2018,%
  AntoniettiManziniVerani_2018}.
%%
%The $p$- and $hp-$version of the VEM were developed in
%\cite{hpVEMbasic,hpVEMcorner} and 
Efficient multigrid methods for the
resulting linear system of equations in
\cite{AntoniettiMascottoVerani_2018}.
A posteriori error estimates can be found in
\cite{CangianiGeorgoulisPryerSutton_2017}.\\

In this work, we propose the conforming VEM for the numerical
approximation of polyharmonic problems.
A peculiar feature of VEM is the possibility of designing
approximation spaces characterized by \pa{high-order} continuity
properties~\cite{BeiraodaVeigaManzini_2014}.
This turns out to be crucial when differential operators of order
higher than two have to be considered, as, for example, in the
numerical treatment of biharmonic problems (see, e.g., the plate
bending problem or the Cahn-Hilliard equation) and polyharmonic
problems.
The numerical approximation of polyharmonic problems has been first
addressed in the eighties by~\cite{Bramble1985} and, more recently,
in~\cite{Barrett2004,Gudi2011,Wang2013,Schedensack2016,Gallistl2017}.
It is worth mentioning an increasing interest in the numerical
approximation of models involving high-order differential operators,
e.g., \cite{Lowengrub1,Lowengrub2,Miranville2015} in the context of
sixth order Cahn-Hilliard equations.
To the best of our knowledge, the conforming VEM proposed in this
article is the first work addressing the approximation of
arbitrary-order polyharmonic problems on polygonal meshes.
%%
%Moreover, when simplicial meshes are considered, this numerical method
%is new also for the finite element literature.

\medskip
The outline of the paper is as follows. 
In Section \ref{sec:polyharmonic}, we introduce the continuous
polyharmonic problem involving the differential operator $\Delta^p$
for any integer $p\geq 1$.
In Section \ref{sec:VEM}, we introduce the conforming VEM
approximation of arbitrary order.
In this case, the global VEM space is made of $C^{p-1}$ functions.
As a collateral result, we obtain a virtual element formulation that
includes the VEM for the Poisson and the biharmonic equation, where
the basis functions are globally $C^r$ for $r\geq 1$.
An abstract result proves the convergence of the method in the energy
norm that correspond to the differential operator $\Delta^p$.
In this section, we also consider an alternative formulation with
virtual element spaces of arbitrarily regular basis functions by
enriching the ``bulk'' degrees of freedom.
In Section~\ref{sec:convergence:analysis}, we derive the error
estimates in different norms.
%% 
\begin{comment}
In Section~\ref{sec:implementation}, we discuss difficulties
concerning a straightforward implementation of the method that follows
the guidelines of~\cite{hitchhikersguideVEM}.
\end{comment}
%% 
Finally, in Section~\ref{sec:conclusions}, we offer our final comments
and conclusions.

\medskip
\emph{Notation and technicalities.}  
Throughout the paper, we consider the usual multi-index notation.
In particular, if $v$ is a sufficiently regular bivariate function and
$\nu=(\nu_1,\nu_2)$ a multi-index with $\nu_1$, $\nu_2$ nonnegative
integer numbers, the function
$D^{\nu}v=\partial^{|\nu|}v\slash{\partial x_1^{\nu_1}\partial
  x_2^{\nu_2}}$ is the partial derivative of $v$ of order
$|\nu|=\nu_1+\nu_2>0$.
For $\nu=(0,0)$, we adopt the convention that $D^{\nu}v$ coincides
with $v$.
Also, for \pa{the sake of exposition}, we may use the shortcut notation
$\partial_{x}v$, $\partial_{y}v$, $\partial_{xx}v$, $\partial_{xy}v$,
$\partial_{yy}v$, to denote the first- and second-order partial
derivatives along the coordinate directions $x$ and $y$;
$\partial_{n}v$, $\partial_{t}v$, $\partial_{nn}v$, $\partial_{nt}v$,
$\partial_{tt}v$ to denote the first- and second-order normal and
tangential derivatives of order one and two along a given mesh edge;
and $\partial^{m}_{n}v$ and $\partial^{m}_{t}v$ to denote the normal
and tangential derivative of $v$ of order $m$ along a given mesh edge.
Finally, let $\mathbf{n}=(\nx,\ny)$ and $\btau=(\tx,\ty)$ be the unit 
normal and tangential \pa{vectors} to a given edge $e$ of \pa{an arbitrary polygon} $\P$.
\pa{We recall the following relations}
between the first derivatives of $v$:
\begin{align}
  \partial_nv      = \nx\partial_xv + \ny\partial_yv,\quad
  \partial_{\tau}v = \tx\partial_xv + \ty\partial_yv,
  \label{eq:edge:derivatives:first}
\end{align}
and the second derivatives of $v$:
\begin{align}
  \partial_{nn}v       = \textbf{n}^T\mathsf{H}(v)\textbf{n},\quad
  \partial_{n\tau}v    = \textbf{n}^T\mathsf{H}(v)\btau,\quad
  \partial_{\tau\tau}v = \btau^T     \mathsf{H}(v)\btau,
  \label{eq:edge:derivatives:second}
\end{align}
where matrix $\mathsf{H}(v)$ is the Hessian of $v$, i.e.,
$\mathsf{H}_{11}(v)=\partial_{xx}v$,
$\mathsf{H}_{12}(v)=\mathsf{H}_{21}(v)=\partial_{xy}v$,
$\mathsf{H}_{22}(v)=\partial_{yy}v$.

% \begin{align*}
%   \mathsf{H}(v) = 
%   \left(
%      \begin{array}{ll}
%      \partial_{xx}v & \partial_{xy}v \\[0.5em]
%      \partial_{yx}v & \partial_{yy}v
%      \end{array}
%   \right).
% \end{align*}

%\begin{align}
%  \partial_nv         &= \nx\partial_xv + \ny\partial_yv,\quad
%  \partial_{\tau}v    = \tx\partial_xv + \ty\partial_yv,\quad %\\[0.5em]
%  %%  ----
%  \partial_{nn}v      = \nx\nx\partial_{xx}v + 2\nx\ny\partial_{xy}v + \ny\ny\partial_{yy}v,\nonumber\\[0.5em]
%  \partial_{\tau\tau}v &= \tx\tx\partial_{xx}v + 2\tx\ty\partial_{xy}v + \ty\ty\partial_{yy}v,\quad
%  \partial_{n\tau}v    = \nx\tx\partial_{xx}v + 2\nx\ty\partial_{xy}v + \ny\ty\partial_{yy}v.
%  %\label{eq:edge:derivatives}
%  %% ----
%\end{align}

%!TEX root=main_vem_polyh.tex

%%%%%%%%%%%%%%%%%%%%%%%%%%%%%%%%%%%%%%%%%
\section{The continuous polyharmonic problem}
\label{sec:polyharmonic}

Let $\Omega\subset \mathbb{R}^2$ be a convex polygonal domain with
boundary $\Gamma$. 
\pa{For an integer $p\geq 1$,} we are interested in developing the conforming  \pa{Virtual Element} method
for the numerical approximation of the following problem:
\begin{subequations}\label{eq:poly:pblm:continuous}
  \begin{align}
    \Delta^p u     & = f \qquad \text{in~}\Omega,\label{eq:poly:pblm:1}\\
    \partial^j_n u & = 0 \qquad \text{for~}j=0,\ldots,p-1\text{~on~}\Gamma,\label{eq:poly:pblm:2}
  \end{align}
\end{subequations}
\pa{(recall the conventional notation $\partial^0_n u=u$)}.
Let 
\pa{
$$V \equiv H^p_{0}(\Omega)=\big\{v\in
H^p(\Omega):\partial^j_nv=0\text{~on~}\Gamma,\,j=0,\ldots,p-1\big\}.$$}
We denote the duality pairing between $V$
and its dual $V^*$ by $\langle \cdot, \cdot \rangle$.
The variational formulation of \eqref{eq:poly:pblm:continuous} reads as:
\emph{Find $u\in V$ such that}
\begin{equation}\label{eq:poly:pblm:wp}
  a(u,v) = \bil{f}{v} \qquad\forall v\in V,
\end{equation}
where, for any nonnegative integer $\ell$, the bilinear form on the
left is given by:
\begin{align}
  a(u,v) = 
  \begin{cases}
    \,\int_{\Omega} \nabla\Delta^\ell u\cdot\nabla\Delta^\ell v \,dx & \mbox{for~$p=2\ell+1$},\\[1em]
    \,\int_{\Omega} \Delta^\ell u\,\Delta^\ell v \,dx                & \mbox{for~$p=2\ell$}.
  \end{cases}
\end{align}
Whenever $f\in L^2(\Omega)$, we may consider the duality pairing
between $L^2(\Omega)$ and itself given by the $L^{2}$-inner product:
\begin{align}
  \bil{f}{v} = (f,v) = \int_{\Omega} f v dx.
 \label{eq:poly:pblm:p-rhs}
\end{align}

The existence and uniqueness of the solution to
\eqref{eq:poly:pblm:wp} follows from the Lax-Milgram
lemma%~\cite{Lax-Milgram:1953} 
because of the continuity and coercivity
of the bilinear form $a(\cdot,\cdot)$ with respect to
$\|\cdot\|_V:=\vert \cdot \vert_{p,\Omega}$ which is a norm on
$H^p_0(\Omega)$.
Moreover, since $\Omega$ is a convex polygon, from~\cite{Gazzola-book}
we know that $u\in H^{2p-m}(\Omega)\cap H^{p}_{0}(\Omega)$ if $f\in
H^{-m}(\Omega)$, $m\leq p$ and it holds that $\norm{u}{2p-m}\leq
C\norm{f}{-m}$.
In the following, we denote the coercivity and continuity constants of
$a(\cdot,\cdot)$ by $\alpha$ and $M$, respectively.

\section{The conforming  \pa{Virtual Element} approximation}
\label{sec:VEM}
\subsection{Abstract framework}
\label{subsec:abstract:framework}
Let $\big\{\Th\big\}_{h}$ be a sequence of decompositions of $\Omega$ where each mesh $\Th$ is a collection of nonoverlapping
polygonal elements $\P$ with boundary $\partial\P$, and  \pa{let} $\Eh$ be the set
of edges $e$ of $\Th$.
Each mesh is labeled by $h$, the diameter of the mesh, defined as
usual by $h=\max_{\P\in\Th} h_{\P}$, where
$h_{\P}=\sup_{\mathbf{x},\mathbf{y}\in\P}\vert\mathbf{x}-\mathbf{y}\vert$. 
We denote the set of vertices in $\Tau_h$ by $\V_h=\V_h^i \cup
\V_h^{\Gamma}$, where $\V_h^i$ and $\V_h^{\Gamma}$ are the subsets of
interior and boundary vertices, respectively.
Accordingly, $\V_h^K$ is the set of vertices of $K$.
The symbol $h_{\vrtx}$ denotes the average of the diameters of the
polygons sharing the vertex $\vrtx$. \pa{For functions
in $\Pi_{\P\in\Th}H^{p}(\P)$, we define
the seminorm $\norm{v}{h}^2=\sum_{\P\in\Th}a^{\P}(v,v)$, being $a^{\P}(\cdot,\cdot)$ the restriction of $a(\cdot,\cdot)$ to $\P$.}\\

The formulation of the  \pa{Virtual Element} method for solving
problem~\eqref{eq:poly:pblm:wp} only requires three mathematical
objects: \pa{the finite dimensional conforming \pa{Virtual Element} space $\Vhrp{r}\subset V$}, the bilinear form
$\pa{a_h(\cdot,\cdot)}$, and the linear functional $\langle f_h, \cdot \rangle$.
Their definition is the topic of this section.
Using such objects, we formulate the VEM as: \emph{Find
  $u_h\in\Vhrp{r}$ such that}
\begin{align}
  a_h(u_h,v_h) = \bil{f_h}{v_h}
  \quad\forall v_h\in\Vhrp{r}.
  \label{eq:poly:VEM}
\end{align}
The well-posedness of the VEM given in~\eqref{eq:poly:VEM}, which
implies existence and uniqueness of the solution $u_h$, is a
consequence of the Lax-Milgram lemma.
An abstract convergence result is available, which depends only on the
following assumptions:
\begin{description}
\item [\textbf{(H1)}] for each $h$ and an assigned integer number
  $r\geq 2p-1$ we are given:
  \begin{enumerate}
\medskip
  \item the \emph{global}  \pa{Virtual Element} space $\Vhrp{r}$ with the
    following properties:
    \begin{description}
    \item[-] $\Vhrp{r}$ is a finite dimensional subspace of
      $H^{p}_{0}(\Omega)$; 
    \item[-] its restriction $\VhPrp{r}$ to any element $\P$ of a given
      mesh $\Th$, \pa{called} the \emph{local}  \pa{Virtual Element} space, is a
      finite dimensional subspace of $H^{p}(\P)$;
    \item[-] \pa{$\PS{r}(\P) \subset \VhPrp{r}$ where $\PS{r}(\P)$ is the space polynomials of
      degree up to \pa{$r\geq1$} defined on $\P$}
    \end{description}
    
    \medskip
  \item the symmetric and coercive bilinear form
    $a_h:\Vhrp{r}\times\Vhrp{r}\to\mathbbm{R}$ admitting the decomposition
    \begin{align*}
      a_h(u_h,v_h) = \sum_{\P\in\Th}a^{\P}_h(u_h,v_h)
      \quad\forall u_h,\,v_h\in\Vhrp{r},
    \end{align*}
    where each local summation term $a^{\P}_h(\cdot,\cdot)$ is also a
    symmetric and coercive bilinear form;
    
    \medskip
  \item an element $f_h$ of the dual space $V^*_h$, which allows us
    to define the continuous linear functional $\bil{f_h}{\cdot}$.
  \end{enumerate}

  \medskip
\item [\textbf{(H2)}] for each $h$ and \pa{each} mesh element $\P\in\Th$, the
  local symmetric bilinear form $\pa{a^{\P}_{h}(\cdot, \cdot)}$ possesses the two
  following properties:
  \begin{description}
  \item[$(i)$] $r$-\textbf{Consistency}: for every polynomial
    $q\in\PS{r}(\P)$ and function $\Vhrp{r}(\P)$ we have:
    \begin{align}
      a^{\P}_h(v_h,q) = a^{\P}(v_h,q);
      \label{eq:poly:r-consistency}
    \end{align}
  \item[$(ii)$] \textbf{Stability}: there exist two positive constants
    $\alpha_*$, $\alpha^*$ independent of $h$ and $\P$ such that for
    every $v_h\in\VhPrp{r}$ it holds:
    \begin{align}
      \alpha_*a^{\P}(v_h,v_h)\leq a^{\P}_h(v_h,v_h)\leq \alpha^*a^{\P}(v_h,v_h).
      \label{eq:poly:stability}
    \end{align}
  \end{description}

\end{description}
To apply the Lax-Milgram lemma we need $\pa{a_{h}(\cdot, \cdot)}$ to be coercive and
continue.
The coercivity of $\pa{a_{h}(\cdot, \cdot)}$ follows from the coercivity of $\pa{a(\cdot, \cdot)}$ and the
stability property \textbf{(H2)} (with coercivity constant
$\alpha^*\alpha$).
The continuity of $\pa{a_{h}(\cdot, \cdot)}$ follows from its symmetry, assumption
\textbf{(H2)} and the continuity of $\pa{a(\cdot, \cdot)}$ (with continuity constant
$\alpha^*M$).
\pa{Denoting by $\PS{r}(\Th)$ the space of piecewise (possibly discontinuous) polynomials defined over the mesh $\Th$, the following abstract convergence result hold.}

\medskip
\begin{theorem}
  \label{theorem:poly:abstract:energy:norm}
  Let $u$ be the solution of the variational
  problem~\eqref{eq:poly:pblm:wp}.
  Then, for every  \pa{Virtual Element} approximation $u_I$ in $\Vhrp{r}$
  and any piecewise polynomial approximation $u_{\pi}\in\PS{r}(\Th)$
  of $u$ we have:
  \begin{align}
    \normV{u-u_h}\leq C
    \Big(
    \normV{u-u_I} + \norm{u-u_{\pi}}{h} + \norm{\fh-f}{\Vhrs{r}}
    \Big),
    \label{eq:poly:abstract:energy:norm}
  \end{align}
  where $C$ is a constant independent of $h$ that may depend on
  $\alpha$, $\alpha_*$, $\alpha^*$, $M$, \pa{and $r$,} and,
  \begin{align}
    \norm{f-\fh}{\Vhrs{r}}
    = \sup_{v_h\in\Vhrp{r}\backslash{\{0\}}}\frac{\bil{f-\fh}{v_h}}{\normV{v_h}}
  \end{align}
  is the approximation error of the right-hand side given in the norm
  of the dual space $\Vhrs{r}$.
\end{theorem}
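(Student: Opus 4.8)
The plan is to establish the Strang-type estimate \eqref{eq:poly:abstract:energy:norm} by the standard virtual element argument: start from coercivity of $a_h(\cdot,\cdot)$ on $\Vhrp{r}$, insert the virtual element interpolant $u_I$, and bound the resulting consistency error by exploiting \textbf{(H2)}. Concretely, first I would set $\delta_h := u_h - u_I \in \Vhrp{r}$ and use the lower stability bound together with coercivity of $a(\cdot,\cdot)$ to write
\begin{align*}
  \alpha\,\alpha_*\,\normV{\delta_h}^2 \leq \alpha_* a(\delta_h,\delta_h) \leq a_h(\delta_h,\delta_h) = a_h(u_h,\delta_h) - a_h(u_I,\delta_h).
\end{align*}
The first term on the right is $\bil{f_h}{\delta_h}$ by the discrete problem \eqref{eq:poly:VEM}. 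For the second term I would add and subtract the piecewise polynomial $u_\pi$ elementwise, using the decomposition $a_h = \sum_{\P} a_h^{\P}$ and the $r$-consistency property \eqref{eq:poly:r-consistency} to replace $a_h^{\P}(u_\pi,\delta_h)$ by $a^{\P}(u_\pi,\delta_h)$ (here one needs $u_\pi|_{\P} \in \PS{r}(\P)$ and $\delta_h|_{\P} \in \VhPrp{r}$, both of which hold).

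**Next** I would reassemble: $a_h(u_I,\delta_h) = \sum_{\P}\big(a_h^{\P}(u_I - u_\pi,\delta_h) + a^{\P}(u_\pi - u,\delta_h)\big) + a(u,\delta_h)$, and use the variational problem \eqref{eq:poly:pblm:wp} to identify $a(u,\delta_h) = \bil{f}{\delta_h}$. Collecting terms yields
\begin{align*}
  \alpha\,\alpha_*\,\normV{\delta_h}^2 \leq \bil{f_h - f}{\delta_h} + \sum_{\P} a_h^{\P}(u_I - u_\pi,\delta_h) + \sum_{\P} a^{\P}(u_\pi - u,\delta_h).
\end{align*}
Now each piece is estimated by Cauchy--Schwarz plus the relevant continuity bound: the first term is $\leq \norm{f_h-f}{\Vhrs{r}}\normV{\delta_h}$ by definition of the dual norm; the third term is $\leq M\,\norm{u-u_\pi}{h}\,\normV{\delta_h}$ using continuity of $a(\cdot,\cdot)$ and $\normV{\delta_h} = \norm{\delta_h}{h}$; the middle term is handled by the upper stability bound $a_h^{\P}(w,w) \leq \alpha^* a^{\P}(w,w)$ and symmetry (Cauchy--Schwarz for the positive semidefinite form $a_h^{\P}$), giving $\leq \alpha^* \norm{u_I - u_\pi}{h}\,\normV{\delta_h} \leq \alpha^*(\normV{u - u_I} + \norm{u - u_\pi}{h})\,\normV{\delta_h}$ after another triangle inequality. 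Dividing through by $\normV{\delta_h}$ gives a bound on $\normV{\delta_h}$, and a final triangle inequality $\normV{u - u_h} \leq \normV{u - u_I} + \normV{\delta_h}$ produces \eqref{eq:poly:abstract:energy:norm} with $C$ depending only on $\alpha,\alpha_*,\alpha^*,M$.

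**The main subtlety** — rather than a genuine obstacle — is the bookkeeping around the $r$-consistency identity: it is stated per element and only pairs a polynomial in the \emph{second} slot with a virtual function, so one must be careful to orient the bilinear forms correctly (using symmetry of both $a^{\P}$ and $a_h^{\P}$) and to verify that $u_I - u_\pi$ restricted to $\P$ is legitimately fed to $a_h^{\P}$ while $u - u_\pi$ is fed to $a^{\P}$. A secondary point worth checking is that $\norm{\cdot}{h}$ agrees with $\normV{\cdot}$ on $V$ and on $\Vhrp{r}\subset V$, which is immediate from the definition $\norm{v}{h}^2 = \sum_{\P} a^{\P}(v,v)$ and the fact that $a(\cdot,\cdot) = \sum_{\P} a^{\P}(\cdot,\cdot)$; this is what lets the three error contributions be combined into a single energy-norm estimate. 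No regularity beyond $u \in V = H^p_0(\Omega)$ is needed for the abstract statement — rates enter only later when $u_I$, $u_\pi$, and $f_h$ are chosen explicitly.
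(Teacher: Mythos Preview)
Your proposal is correct and follows essentially the same route as the paper's own proof: set $\delta_h=u_h-u_I$, use the lower stability bound to pass from $a(\delta_h,\delta_h)$ to $a_h(\delta_h,\delta_h)$, invoke the discrete equation~\eqref{eq:poly:VEM}, add and subtract $u_\pi$ elementwise to exploit $r$-consistency, then add and subtract $u$ to bring in~\eqref{eq:poly:pblm:wp}, and finish with Cauchy--Schwarz, the upper stability bound, and a triangle inequality. The only cosmetic differences are that the paper places the outer triangle inequality $\normV{u-u_h}\leq\normV{u-u_I}+\normV{\delta_h}$ at the start rather than the end, and writes the coercivity step as $\alpha_*\normV{\delta_h}^2=\alpha_*a(\delta_h,\delta_h)$ (identifying the energy norm directly with $a(\cdot,\cdot)^{1/2}$) rather than your $\alpha\,\alpha_*\normV{\delta_h}^2\leq\alpha_* a(\delta_h,\delta_h)$; neither affects the argument.
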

\begin{proof}
  The proof of this theorem is similar to the proofs of the
  convergence theorem in the energy norm for the  \pa{Virtual Element}
  approximation of lower-order elliptic
  problems~\cite{BeiraodaVeigaBrezziCangianiManziniMariniRusso_2013,BrezziMarini_2013}.
  We briefly sketch how the proof works for completeness of
  exposition.
  First, an application of the triangular inequality implies that:
  \begin{align}
    \normV{u-u_h}\leq\normV{u-u_I}+\normV{u_I-u_h}.
    \label{eq:poly:abstract:proof:00}
  \end{align}
  Let $\delta_h=u_h-u_I$.
  Starting from the definition of $\normV{\,\cdot\,}$, we find that:
  \begin{align*}
    \begin{array}{rll}
      \alpha_*\normV{\delta_h}^2   
      &= \alpha_*a(\delta_h,\delta_h)                                                                              &\quad \mbox{\big[use~\eqref{eq:poly:stability}\big]}\nonumber\\[0.5em]
      &\quad\leq a_h(\delta_h,\delta_h)                                                                            &\quad \mbox{\big[use~$\delta_h=u_h-u_I$\big]}\nonumber\\[0.5em]
      &\quad\leq a_h(\delta_h,u_h)-a_h(\delta_h,u_I)                                                                &\quad \mbox{\big[use~\eqref{eq:poly:VEM}\big]}\nonumber\\[0.5em]
      &\quad\leq \bil{\fh}{\delta_h}-\sum_{\P\in\Th}a^{\P}_h(\delta_h,u_I)                                           &\quad \mbox{[add $\pm u_{\pi}$\big]}\nonumber\\[0.5em]
      &\quad\leq \bil{\fh}{\delta_h}-\sum_{\P\in\Th}\Big(a^{\P}_h(\delta_h,u_I-u_{\pi})+a^{\P}_h(\delta_h,u_{\pi})\Big) &\quad \mbox{\big[use~\eqref{eq:poly:r-consistency}\big]}\nonumber\\[0.5em]
      &\quad\leq \bil{\fh}{\delta_h}-\sum_{\P\in\Th}\Big(a^{\P}_h(\delta_h,u_I-u_{\pi})+a^{\P}  (\delta_h,u_{\pi})\Big) &\quad \mbox{[add $\pm u$\big]}\nonumber\\[0.5em]
      &\quad\leq \bil{\fh}{\delta_h}-\sum_{\P\in\Th}\Big(a^{\P}_h(\delta_h,u_I-u_{\pi})+a^{\P}  (\delta_h,u_{\pi}-u) + a^{\P}(\delta_h,u)\Big) &\quad \mbox{\big[use~\eqref{eq:poly:pblm:wp}\big]}\nonumber\\[0.5em]
      &\quad=    \bil{\fh-f}{\delta_h}-\sum_{\P\in\Th}\Big(a^{\P}_h(\delta_h,u_I-u_{\pi})+a^{\P}  (\delta_h,u_{\pi}-u)\Big).
    \end{array}
  \end{align*}
  Then, we use~\eqref{eq:poly:stability}, add and subtract $u$, use
  the continuity of $a^{\P}$, sum over all the elements $\P$, divide
  by $\normV{\delta_h}$, take the supremum of the right-hand side
  error term on $\Vhrp{r}\backslash{\{0\}}$, and obtain
  \begin{align}
     \alpha_*\normV{\delta_h} \leq
     \sup_{ v_h\in\Vhrp{r}\backslash{ \{0\} } } \frac{\vert\bil{\fh-f}{v_h}\vert}{\normV{v_h}} 
     + M\left( \alpha^*\normV{u_I-u} + (1+\alpha^*)\norm{u-u_{\pi}}{h} \right).
     \label{eq:poly:abstract:proof:15}
  \end{align}
  The assertion of the theorem follows by
  using~\eqref{eq:poly:abstract:proof:15}
  in~\eqref{eq:poly:abstract:proof:00} and suitably defining constant \pa{the}
  $C$.
\end{proof}

%\subsection{Preliminaries}
%% 
Let $\P\subset\mathbb{R}^2$ be a polygonal element and set
\begin{align*}
  a^{\P}(u,v) = 
  \begin{cases}
    \,\int_{\P} \nabla\Delta^\ell u\cdot\nabla\Delta^\ell v \,dx & \mbox{for~$p=2\ell+1$},\\[1em]
    \,\int_{\P} \Delta^\ell u\,\Delta^\ell v \,dx                & \mbox{for~$p=2\ell$}.
  \end{cases}
\end{align*}
For an odd $p$, i.e., $p=2\ell+1$, a repeated application of the
integration by parts formula yields
\begin{align} 
  a^{\P}(u,v) 
  =& -\int_{\P} \Delta^p u\,v\,dx + \int_{\partial\P}\partial_n(\Delta^\ell u)\,\Delta^\ell v\,ds\nonumber \\[0.5em]
  &  +\sum_{i=1}^\ell
  \left( 
    \int_{\partial\P}\partial_n(\Delta^{p-i} u)\,\Delta^{i-1}v\,ds
    -\int_{\partial\P}\Delta^{p-i}u\,\partial_n(\Delta^{i-1}v)\,ds
  \right),
  \label{eq:poly:intbyparts:odd:p}
\end{align}
while, for an even $p$, i.e., $p=2\ell$, we have
\begin{align} 
  a^{\P}(u,v) 
  = -\int_{\P}\Delta^p u\,v\,dx
  + \sum_{i=1}^\ell
  \left(
    \int_{\partial\P}  \partial_n(\Delta^{p-i} u)\,\Delta^{i-1}v\,ds
    -\int_{\partial\P} \Delta^{p-i} u\,\partial_n(\Delta^{i-1}v)\,ds
  \right).
  \label{eq:poly:intbyparts:even:p}
\end{align} 

\subsection{Virtual element spaces}
For $p\geq 1$ and $r\geq 2p-1$, the local  \pa{Virtual Element} space on
element $\P$ is defined by
\begin{align}
  \VhPrp{r} = \Big\{v_h\in H^p(\P)\,:\,
  & \Delta^p v_h\in\PS{r-2p}(\P),\,D^{\nu}v_h\in C^0(\partial\P), \vert\nu\vert\leq p-1, \nonumber\\
  & v_h\in\PS{r}(e),\,\partial^i_nv_h\in\PS{r-i}(e),\,i=1,\ldots,p-1~\forall e\in\partial\P
  \Big\},
\end{align}
with the conventional notation that $\PS{-1}(\P)=\{0\}$.
The  \pa{Virtual Element} space $\VhPrp{r}$ contains the space of
polynomials $\PS{r}(\P)$, for $r\geq 2p-1$. 
Moreover, for $p=1$, it coincides with the conforming  \pa{Virtual Element}
space for the Poisson equation
\cite{BeiraodaVeigaBrezziCangianiManziniMariniRusso_2013}; for $p=2$, it
coincides with the conforming  \pa{Virtual Element}
space for the  biharmonic equation \cite{BrezziMarini_2013}.\\

We characterize the \pa{functions} in $\VhPrp{r}$ through \pa{the following}
\emph{degrees of freedom}:
\medskip
\begin{description}
\item[$(D1)$] $h_{\vrtx}^{|\nu|} D^{\nu}v_h(\vrtx)$, $\vert\nu\vert\leq p-1$ for any
  vertex $\vrtx$ of $\P$;  
\medskip
\item[$(D2)$] $\displaystyle\hE^{-1}\int_eqv_h\,ds$ for any
  $q\in\PS{r-2p}(e)$ and \pa{any} edge $e$ of $\partial\P$;
  \medskip
\item[$(D3)$] $\displaystyle \hE^{-1+j}\int_eq\partial^j_nv_h\,ds$ for
  any $q\in\PS{r-2p+j}(e)$, $j=1,\ldots,p-1$ and \pa{any} edge $e$ of
  $\partial\P$;
  \medskip
\item[$(D4)$] $\displaystyle\hP^{-2}\int_{\P}qv_h\,ds$ for any
  $q\in\PS{r-2p}(\P)$.\\
\end{description}
Here, as usual, we assume that $\PS{-n}(\pa{\cdot})=\{0\}$ for $n\geq1$.
In $(D3)$, \pa{the} index $j$ starts from $1$ instead of $0$ since for $j=0$ we
would find the degrees of freedom that are already listed in $(D2)$
(recall that $\partial^j_nv_h=v_h$ for $j=0$).
%\footnote{\MarcoM{Check! Non
 %   dovrebbe essere $p>m/2$? { MV: a me sembra sia corretto cosi' come nel testo.}}}
%%%%%%%%%%%%%%%%%%%%%%%%%%%%%%%%%%%%%%%%%%%%%%%%%%%%%%%%%%%%%%%%%%%%%%%%%
We note that for any sufficiently regular two-dimensional domain
$\Omega$ we have the embedding 
{\color{black}$C^m({\Omega})\subset H^p(\Omega)$ if $m\leq p-1$}.
This regularity is reflected by the previous choice of the degrees of
freedom, which allows us to reconstruct the trace of $v_h$ and the
derivatives $\partial^j_nv_h\in\PS{r-j}(e)$ on each edge of
$\partial\P$.
Since these polynomial traces on a given edge only depend on the edge
degrees of freedom, the traces are the same from inside the two mesh
elements sharing that edge.
To interpolate $v_h\in\PS{r}(e)$ we need $r+1$ conditions for each
edge $e$.
Let $\vrtx_{A}$ and $\vrtx_{B}$ denote the vertices of edge $e$ and
use the shortcut notation: $v_{A}=v_h(\vrtx_{A})$,
$\partial_{n}v_{A}=\partial_{n}v_h(\vrtx_{A})$, etc.
Then,
\begin{itemize}
  % \medskip
\item the degrees of freedom $(D1)$ provides $v_{A}$, $v_{B}$,
  $\partial^{k}_{\tau}v_A$ and $\partial^{k}_{\tau}v_B$ for
  $k=1,\ldots,p-1$, i.e., $2p$ degrees of freedom, which are enough to
  interpolate $\restrict{v_h}{e}$ in $\PS{r}(e)$ if $r=2p-1$.
  When $r>2p-1$ the remaining $(r+1)-2p$ conditions required to
  interpolate $\restrict{v_h}{e}$ in $\PS{r}(e)$ are provided by the
  degrees of freedom $(D2)$.
  The tangential derivatives $\restrict{\partial^{k}_{\tau}v_h}{e}$ in
  $\PS{r-k}(e)$ for $k=1,\ldots,r-1$ can be obtained by deriving $k$
  times the interpolated polynomial $\restrict{v_h}{e}$ along $e$;

  % \medskip
\item similarly, for each $j=1,\ldots,p-1$, the degrees of freedom
  $(D1)$ provides $2(p-j)$ conditions, i.e.,
  $\partial^{k}_{\tau}\partial^{j}_{n}v_A$ and
  $\partial^{k}_{\tau}\partial^{j}_{n}v_B$, for $k=j,\ldots,p-1$.
  The remaining $r+1-2(p-j)$ conditions to interpolate
  $\partial^{j}_{n}v_h$ in $\PS{r-j}(e)$ are provided by the
  $(r-2p+j)+1$ degrees of freedom $(D3)$.
  The tangential derivatives
  $\restrict{\partial^{k}_{\tau}\partial^{j}_{n}v_h}{e}$ in
  $\PS{r-j-k}(e)$ for $k=1,\ldots,r-j-1$ can be obtained by deriving
  $k$ times the interpolated polynomial $\partial^{j}_{n}v_h$ along
  $e$.
\end{itemize}
\medskip
\noindent
Figure~\ref{fig:trih:dofs} illustrates the degrees of freedom on a
given edge $e$ for $p=3$ (triharmonic case) and $r=5,6$.
\begin{figure}
  \begin{center}
    \begin{tabular}{cc}
      \includegraphics[scale=0.5]{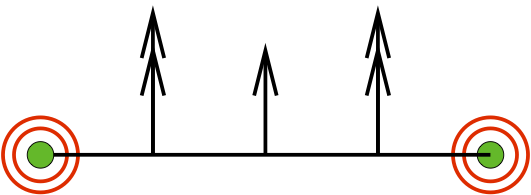} & \hspace{2cm}
      \includegraphics[scale=0.5]{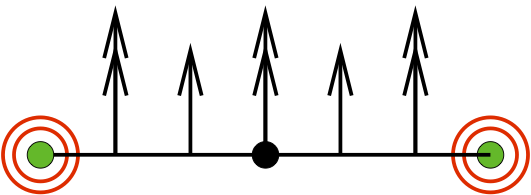} \\
      $p=3,\,r=5$ & \hspace{2cm} $p=3,\,r=6$
    \end{tabular}
    \caption{Triharmonic problem: edge degrees of freedom of the
    \pa{Virtual Element} space $\VhPr{r}$.
      Here, $p$ is the order of the partial differential operator and
      $p=3$ corresponds to the triharmonic case; $r=5,6$ are the
      integer parameters that specify the maximum degree of the
      polynomial subspace $\PS{r}(\P)$ of the VEM space
      $\VhPr{r}$.
      The (green) dots at the vertices represent the vertex values
      and each (red) vertex circle represents an order of derivation.
      The (black) dot on the edge represents the moment of
      $\restrict{v_h}{e}$; the arrows represent the moments of
      $\restrict{\partial_nv_h}{e}$; the double arrows represent the
      moments of $\restrict{\partial_{nn}v_h}{e}$.
    }
     \label{fig:trih:dofs}
  \end{center}
\end{figure}
Finally, we note that the internal degrees of freedom $(D4)$ make it
possible to define the orthogonal polynomial projection of $v_h$ onto
the space of polynomial of degree $r-2p$.\\

% ---------------------------------------------------------------------------------------
The dimension of $\VhPrp{r}$ is 
\begin{align*}
  \DIM\VhPrp{r} 
  &= \frac{p(p+1)}{2}\NP + \NP\sum_{j=0}^{p-1}\PS{r-2p+j}(e) + \DIM\PS{r-2p}(\P) \\[0.5em]
  &= \frac{p(p+1)}{2}\NP + \NP\sum_{j=0}^{p-1}( r-2p+j+1)    + \frac{(r-2p+1)(r-2p+2)}{2},
\end{align*}
where $\NP$ is the number of vertices, which equals the number of
edges, of $\P$.

\medskip 
\pa{The following lemma ensures that the above choice of degrees
of freedom is unisolvent in $\VhPrp{r}$}.
\begin{lemma}\label{lemma:unisolvence}
  The degrees of freedom (D1)-(D4) are unisolvent for $\VhPrp{r}$.
\end{lemma}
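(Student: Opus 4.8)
The plan is to follow the usual two-step pattern for virtual-element unisolvence. First one checks that the number of conditions (D1)--(D4) equals $\DIM\VhPrp{r}$: this is exactly the displayed dimension count, since the multi-indices $\nu$ with $|\nu|\le p-1$ number $p(p+1)/2$ per vertex, the edge moments (D2)--(D3) contribute $\sum_{j=0}^{p-1}(r-2p+j+1)$ per edge, and (D4) contributes $\DIM\PS{r-2p}(\P)$. It then suffices to show that the linear map sending $v_h\in\VhPrp{r}$ to the tuple of its (D1)--(D4) values is injective; so I would fix $v_h\in\VhPrp{r}$ with all of (D1)--(D4) vanishing and prove $v_h\equiv 0$ on $\P$.

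\emph{Step 1 (the boundary traces vanish).} On a fixed edge $e$ of $\partial\P$ with endpoints $\vrtx_A,\vrtx_B$, parametrized by arclength as $[a,b]$, the vanishing of the (D1) values forces $\restrict{v_h}{e}\in\PS{r}(e)$ to have a zero of order $p$ at each endpoint (value and tangential derivatives up to order $p-1$ vanish there), so $\restrict{v_h}{e}(t)=(t-a)^p(b-t)^p\,q(t)$ with $q\in\PS{r-2p}(e)$; testing against $q$ and using the vanishing of (D2) gives $0=\int_e\restrict{v_h}{e}\,q\,ds=\int_a^b(t-a)^p(b-t)^p\,q(t)^2\,dt$, whence $q\equiv 0$ and $\restrict{v_h}{e}\equiv 0$. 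The same argument applied to $\restrict{\partial^j_n v_h}{e}\in\PS{r-j}(e)$, using the (D1) values (which now force a zero of order $p-j$ at each endpoint) together with the (D3) moments against $\PS{r-2p+j}(e)$, yields $\restrict{\partial^j_n v_h}{e}\equiv 0$ for $j=1,\dots,p-1$. Since any $D^\nu v_h$ with $|\nu|\le p-1$ is a linear combination of terms $\partial^a_n\partial^b_\tau v_h$ with $a+b=|\nu|\le p-1$, whose restriction to $e$ equals $\partial^b_\tau$ applied to the trace $\restrict{\partial^a_n v_h}{e}$ and is therefore identically zero, we conclude $D^\nu v_h=0$ on $\partial\P$ for all $|\nu|\le p-1$; in particular $\partial^j_n v_h=0$ on $\partial\P$ for $j=0,\dots,p-1$, i.e.\ $v_h\in H^p_0(\P)$.

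\emph{Step 2 ($\Delta^p v_h=0$, then $v_h=0$).} Since $v_h\in\VhPrp{r}$, $\Delta^p v_h\in\PS{r-2p}(\P)$. Inserting the vanishing traces of Step~1 into the integration-by-parts identity \eqref{eq:poly:intbyparts:odd:p} (for $p=2\ell+1$) or \eqref{eq:poly:intbyparts:even:p} (for $p=2\ell$) annihilates every boundary integral, because in each of them one factor is $\Delta^{i-1}v_h$, $\partial_n(\Delta^{i-1}v_h)$, or $\Delta^\ell v_h$, all of which involve only derivatives of $v_h$ of order at most $p-1$; hence $a^\P(v_h,v_h)=-\int_\P\Delta^p v_h\,v_h\,dx$. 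Taking $q=\Delta^p v_h$ in (D4) gives $\int_\P\Delta^p v_h\,v_h\,dx=0$, so $a^\P(v_h,v_h)=0$. Finally $a^\P(\cdot,\cdot)$ is positive definite on $H^p_0(\P)$ — repeated integration by parts shows $\snorm{v}{p,\P}^2\le C\,a^\P(v,v)$ there, and $\snorm{\cdot}{p,\P}$ is a norm on $H^p_0(\P)$ — so $a^\P(v_h,v_h)=0$ forces $v_h\equiv 0$. This proves injectivity, hence unisolvence.

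The step I expect to be the main obstacle is Step~1: one must verify that the Hermite-type edge interpolation problem determined by (D1)--(D3) is unisolvent on each edge. The positivity-of-the-weight device makes each individual claim routine, but the bookkeeping relating the vertex tangential-derivative data supplied by (D1) to the normal-derivative traces $\restrict{\partial^j_n v_h}{e}$, and its compatibility with the $C^{p-1}$ continuity across vertices built into $\VhPrp{r}$, must be carried out carefully (this is in essence the content of the itemized discussion preceding the lemma). A secondary technical point is the positive definiteness of $a^\P$ on the subspace of $H^p(\P)$ functions whose derivatives of order $\le p-1$ vanish on $\partial\P$, used to close Step~2; this is where one peels off Laplacians one at a time (each $\Delta^k v_h$, $k<\ell$, being harmonic and in $H^2_0(\P)$, hence zero, with the analogous statement for the top order $\ell$), or, equivalently, invokes a Poincaré-type estimate on $H^p_0(\P)$.
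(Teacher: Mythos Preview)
Your proposal is correct and follows essentially the same route as the paper: a dimension count, then injectivity via (i) vanishing of all edge traces $\partial^j_n v_h$ on $\partial\P$, (ii) the integration-by-parts identities \eqref{eq:poly:intbyparts:odd:p}--\eqref{eq:poly:intbyparts:even:p} combined with the vanishing of (D4) to obtain $a^\P(v_h,v_h)=0$, and (iii) the fact that $a^\P(\cdot,\cdot)$ is a norm on $H^p_0(\P)$. The paper additionally spells out the case $p=3$, $r=5$ explicitly with a ``peeling'' argument ($\Delta v_h$ constant, then zero by the Divergence Theorem, then $v_h$ harmonic with zero trace), while for general $p$ it only sketches the three observations (a)--(c) that coincide with your Steps~1--2; your factorization/positive-weight device for the edge Hermite problem in Step~1 is more explicit than the paper's appeal to ``the previous discussion on the degrees of freedom''.
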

\begin{proof}
  To ease the presentation, we first consider the lowest order space
  ($r=5$) for the triharmonic problem ($p=3$).
  A counting argument implies that the cardinality of the set of
  degrees of freedom $(D1)-(D3)$ is equal to the dimension of
  $V^3_{h,5}$ (note that in this specific case $(D4)$ is empty as
  there is no volumetric integral in the right-hand side
  of~\eqref{eq:poly:intbyparts:odd:p}).
  Then, we are left to prove that a function $v_h$ in $V^3_{h,5}$ is
  zero if its degrees of freedom are zero.
  From the previous discussion on the degrees of freedom, we know that
  the edge polynomial interpolation of the traces of $v_h$,
  $\partial_{n}v_h$, $\partial_{nn}v_h$ and $\partial_{\tau\tau}v_h$,
  and, hence, $\Delta v_h=\partial_{tt}v_h+\partial_{nn}v_h$, must be
  zero if the degrees of freedom of $v_h$ are zero.
  Hence, from \eqref{eq:poly:intbyparts:odd:p} with $\ell=1$, we find that
  $\|\nabla(\Delta v_h)\|^2_{L^2(\P)} = 0$ inside $\P$, which implies
  that $\Delta v_h$ is constant in $\P$.
  Using the Divergence Theorem we find that:
  \begin{align*}
    \Delta v_h|\P| 
    = \int_{\P}\Delta v_h\,dx
    = \int_{\partial\P}\partial_n v_h\,ds
    = 0.
  \end{align*}
  Therefore, $v_h$ is the solution of the boundary value problem
  $\Delta v_h=0$ in $\P$ with boundary conditions $v_h=0$ on
  $\partial\P$, and, thus, $v_h=0$ in $\P$.

  The case of a generic $p$ can be treated analogously by properly
  employing relations \eqref{eq:poly:intbyparts:odd:p} (odd $p$) and
  \eqref{eq:poly:intbyparts:even:p} (even $p$) in combination with
  the \pa{following observations}:
  \begin{description}
  \item[$(a)$] the polynomial trace
    $\restrict{\Delta^{\nu}v_h}{e}=\partial_n^\alpha\partial_\tau^\beta
    v_h$ for every integers $\alpha$, $\beta$, and $\nu$ (with
    $\nu\geq1$) such that $\alpha+\beta=2\nu$ must be zero if the
    degrees of freedom $(D1)-(D3)$ of $v_h$ are zero;
  \item[$(b)$] the volumetric integrals in
    \eqref{eq:poly:intbyparts:odd:p} and
    \eqref{eq:poly:intbyparts:even:p} are zero if the degrees of
    freedom $(D4)$ are zero;
  \item[$(c)$] $a^K(v_h,v_h)$ is a norm on $H^p_0(K)$.
  \end{description}
\end{proof}

%%%%%%%%%%%%%%%%%%%%%%%%%%%%%%%%%%%%%%%%%%%%%%%%%%%%%%%%%%%%%%%%%%%%%%%%%

%% contruction of the elliptic projection
\medskip
To define the elliptic projection $\PiPr{r}:\VhPrp{r}\to\PS{r}(\P)$,
we first need to introduce the \emph{vertex average projector}
$\widehat{\Pi}^{\P}:\VhPrp{r}\to\PS{0}(\P)$, which projects any smooth
enough function defined on $\P$ onto the space of constant
polynomials.
Let $\psi$ be a continuous function defined on $\P$.
The \emph{vertex average projection} of $\psi$ onto the constant
polynomial space is defined as:
\begin{align}
  \widehat{\Pi}^{\P}\psi = \frac{1}{\NP}\sum_{\vrtx\in\partial\P}\psi(\mathbf{x}_{\vrtx}),
  \label{eq:trih:vertex:average:projection}
\end{align}
where $\mathbf{x}_{\vrtx}$ is the position of vertex $\vrtx$.
Finally, we define the elliptic projection
$\PiPr{r}:\VhPrp{r}\to\PS{r}(\P)$ as the solution of the finite
dimensional variational problem
\begin{align}
  a^{\P}(\PiPr{r}v_h,q)               &= a^{\P}(v_h,q)\phantom{ \widehat{\Pi}^{\P}D^{\nu}v_h } \forall q\in\PS{r}(\P),\label{eq:poly:Pi:A}\\[0.5em]
  \widehat{\Pi}^{\P}D^{\nu}\PiPr{r}v_h &= \widehat{\Pi}^{\P}D^{\nu}v_h\phantom{ a^{\P}(v_h,q) } |\nu|\leq { p-1}.         \label{eq:poly:Pi:B}
\end{align}
Such operator has two important properties:

\medskip
\begin{description}
\item[$(i)$] it is a polynomial-preserving operator in the sense that
  $\PiPr{r} q=q$ for every $q\in\PS{r}(\P)$.  
  \medskip
\item[$(ii)$] $\PiPr{r} v_h$ is \emph{computable} using only the
  degrees of freedom of $v_h$.
  In fact, in view of the integration by parts
  formulas~\eqref{eq:poly:intbyparts:odd:p} and
  \eqref{eq:poly:intbyparts:even:p}, the right-hand side
  of~\eqref{eq:poly:Pi:A} takes the form (depending on the parity of
  $p$):
  \begin{align}
    a^{\P}(v_h,q) = & -\int_{\P} \Delta^p q\,v_h\,dx + \int_{\partial\P}\partial_n(\Delta^\ell q)\,\Delta^\ell v_h\,ds\nonumber \\[0.5em]
  &  +\sum_{i=1}^\ell
  \left\{ 
    \int_{\partial\P}\partial_n(\Delta^{p-i} q)\,\Delta^{i-1}v_h\,ds
    -\int_{\partial\P}\Delta^{p-i}q\,\partial_n(\Delta^{i-1}v_h)\,ds
  \right\}\pa{,}
    \label{eq:poly:odd:computability}
  \end{align}
  or 
  \begin{align}
    a^{\P}(v_h,q) = -\int_{\P}\Delta^p q\,v_h\,dx
    + \sum_{i=1}^\ell
    \left\{ 
      \int_{\partial\P}  \partial_n(\Delta^{p-i} q)\,\Delta^{i-1}v_h\,ds
      -\int_{\partial\P} \Delta^{p-i} q\,\partial_n(\Delta^{i-1}v_h)\,ds
    \right\}.
    \label{eq:poly:even:computability}
  \end{align}
 %%%%%%%%%% 
  In~\eqref{eq:poly:odd:computability} and 
  \eqref{eq:poly:even:computability}, $\Delta^{p-i} q$, and
  $\Delta^pq$ are easily computable from $q$.
  The volumetric integral  on $\P$ can be expressed using the degrees of
  freedom $(D5)$ since it is the moment of $v_h$ against $\Delta^p q$,
  which is a polynomial of degree $r-2p$.
  The edge traces of $\Delta^\ell v_h$, $\partial_n (\Delta^{i-1} v_h)$ and $\Delta^{i-1}
  v_h$ are computable from the
  degrees of freedom $(D1)-(D4)$ of $v_h$ by solving suitable
  polynomial interpolation problems. 
\end{description}

%-----------Global space 
\medskip
Building upon the local spaces $\VhPrp{r}$ for all $\P\in\Th$, the global
conforming  \pa{Virtual Element} space $\Vhrp{r}$ is defined on $\Omega$ as
\begin{align}
  \Vhrp{r} = \Big\{ 
  v_h\in H^{p}_{{0}}(\Omega)\,:\,\restrict{v_h}{\P}\in\VhPrp{r}
  \,\,\forall\P\in\Th
  \Big\}. 
  \label{eq:poly:global:space}
\end{align}
\pa{We remark that} the associated global space is made of $C^{p-1}$ functions.
Indeed, the restriction of a  \pa{Virtual Element} function $v_h$ to each
element $\P$ belongs to $H^p(\P)$ and glues  with $C^{p-1}$-regularity
across the internal mesh faces. The global degrees of freedom induced by the local degrees of freedom
are listed as follows:
  \medskip
\begin{description}
\item[-] $h_{\vrtx}^{|\nu|} D^{\nu}v_h(\vrtx)$, $\vert\nu\vert\leq p-1$
  for every interior vertex $\vrtx$ of $\Th$;
\medskip
\item[-] $\displaystyle\hE^{-1}\int_eqv_h\,ds$ for any $q\in\PS{r-2p}(e)$ and every interior edge $e\in\Eh$;
\medskip
\item[-] $\displaystyle\int_eq\partial_nv_h\,ds$ for every
  $q\in\PS{r-5}(e)$ and every interior edge $e\in\Eh$;
\medskip
\item[-]  $\displaystyle \hE^{-1+j}\int_eq\partial^j_nv_h\,ds$ for any
  $q\in\PS{r-2p+j}(e)$ $i=1,\ldots,p-1$ and every interior edge $e\in\Eh$;
\medskip
\item[-] $\displaystyle\hP^{-2}\int_{\P}qv_h\,ds$ for any
  $q\in\PS{r-2p}(\P)$ and every $\P\in\Th$.
\end{description}
\begin{remark}
For $p=3$ (triharmonic case) we can also consider the following modified lowest order space
  \begin{align*}
    \VhtP{5} = \Big\{ 
    v_h\in H^3(\P)
    \,:\,
    &\Delta^3 v_h = 0, 
    v_h,\,\partial_nv_h,\,\partial_{nn}v_h\in C^0(\partial\P),
    \\
    &
    v_h\in\PS{5}(e),\,
    \partial_nv_h\in\PS{3}(e),\,
    \partial_{nn}v_h\in\PS{2}(e)~\forall e\in\partial\P
    \Big\}
  \end{align*}
  with associated dofs
  \begin{description}
  \item[$(D1')$] $h_{\vrtx}^{|\nu|}D^{\nu}v_h(\vrtx)$, $\vert\nu\vert\leq 2$ for any
    vertex $\vrtx$ of $\partial\P$;
    
    \medskip
  \item[$(D2')$] $\displaystyle h_{e}\int_e\partial_{nn}v_h\,ds$ for any
    edge $e$ of $\partial\P$.
    
  \end{description}
  Using the same argument of the proof of
  Lemma~\ref{lemma:unisolvence}, we can still prove that $(i)$ the
  degrees of freedom $(D1')$ and $(D2')$ are unisolvent in $\VhtP{5}$;
  $(ii)$ the space of polynomials of degree {$4$} are a subspace of $\VhtP{5}$; $(iii)$ the
  elliptic projection of $v_h$ is still computable from this choice of
  degrees of freedom; $(iv)$ the associated global space 
  \begin{align}
    \Vht{5} = \Big\{ 
    v_h\in H^{3}_0(\Omega)\,:\,v_h\vert_{\P}\in\VhtP{5}
    \,\,\forall\P\in\Th
    \Big\},
    \label{eq:trih:global:space:modified:VEM}
  \end{align}
  which is obtained by gluing together all the elemental spaces
  $\VhtP{5}$, is still made of $C^2$ functions. 
{\color{black}  Analogously, in the general case one can build the following modified lowest order spaces (containing the space of polynomials of degree {$2p-2$}  )
\begin{align}
  \tVhPrp{2p-1} = \Big\{v_h\in H^p(\P)\,:\,
  & \Delta^p v_h=0,\,D^{\nu}v_h\in C^0(\partial\P), \vert\nu\vert\leq p-1, \nonumber\\
  & v_h\in\PS{2p-1}(e),\,\partial^i_nv_h\in\PS{2p-2-i}(e),\,i=1,\ldots,p-1~\forall e\in\partial\P
  \Big\},
\end{align}
with associated dofs

\begin{description}
  \item[$(D1')$] $h_{\vrtx}^{|\nu|}D^{\nu}v_h(\vrtx)$, $\vert\nu\vert\leq p-1$ for any
    vertex $\vrtx$ of $\partial\P$;
    
    \medskip
  \item[$(D2')$] $\displaystyle h_{e}^{-1+j}\int_eq \partial_{n}^i v_h\,ds$ for any $q \in\PS{i-2}(e)$ and any 
    edge $e$ of $\partial\P$, $i=1,\ldots,p-1$.
    
  \end{description}
}
\end{remark}

\subsection{Construction of the bilinear form}
\label{subseq:VEM:bilinear:form}

%% VEM bilinear form
We \pa{write} the symmetric bilinear form
$a_{h}:\Vhrp{r}\times\Vhrp{r}\to\mathbbm{R}$ as the
sum of local terms
\begin{align}
  a_{h}(u_h,v_h) = \sum_{\P\in\Th}a_{h}^{\P}(u_h,v_h),
\end{align}
where each local term $a_h^{\P}:\VhPrp{r}\times\VhPrp{r}\to\mathbbm{R}$
is a symmetric bilinear form.
We set
\begin{align}
  a_h^{\P}(u_h,v_h) 
  = a^{\P}(\PiPr{r} u_h,\PiPr{r} v_h) 
  + \SP(u_h-\PiPr{r} u_h,v_h-\PiPr{r} v_h),
  \label{eq:poly:ah:def}
\end{align}
where $\SP:\VhPrp{r}\times\VhPrp{r}\to\mathbbm{R}$ is a symmetric
positive definite bilinear form such that
\begin{align}
  \sigma_*a^{\P}(v_h,v_h)\leq\SP(v_h,v_h)\leq\sigma^*a^{\P}(v_h,v_h)
  \qquad\forall v_h\in\VhPrp{r}\textrm{~with~}\PiPr{r} v_h=0,
  \label{eq:poly:S:stability:property}
\end{align}
for two some positive constants $\sigma_*$, $\sigma^*$ independent of
$h$ and $\P$.
The bilinear form $a_h^{\P}\pa{(\cdot,\cdot)}$ has the two fundamental properties of
\emph{consistency} and \emph{stability} stated by the following lemma.
\begin{lemma}
  The bilinear form $a_h^{\P}\pa{(\cdot,\cdot)}$ defined in~\eqref{eq:poly:ah:def}
  possesses both $(i)$ $r$-stability and $(ii)$ consistency properties
  stated in \eqref{eq:poly:r-consistency}
  and~\eqref{eq:poly:stability}, respectively, as required by
  assumption~\textbf{(H2)}.
\end{lemma}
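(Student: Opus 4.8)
The plan is to read off both properties directly from the definition \eqref{eq:poly:ah:def} of $a_h^{\P}(\cdot,\cdot)$, using only the two structural facts about the elliptic projection $\PiPr{r}$ already recorded above — that it reproduces polynomials of degree $r$ (hence is idempotent, since $\PiPr{r}v_h\in\PS{r}(\P)$) and that it satisfies the Galerkin-type identity \eqref{eq:poly:Pi:A} — together with the spectral equivalence \eqref{eq:poly:S:stability:property} of the stabilizing form $\SP$.

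For $r$-consistency I would fix $q\in\PS{r}(\P)$ and $v_h\in\VhPrp{r}$. Since $\PiPr{r}q=q$, the stabilization term $\SP(v_h-\PiPr{r}v_h,\,q-\PiPr{r}q)$ in \eqref{eq:poly:ah:def} vanishes identically, so $a_h^{\P}(v_h,q)=a^{\P}(\PiPr{r}v_h,\PiPr{r}q)=a^{\P}(\PiPr{r}v_h,q)$; applying \eqref{eq:poly:Pi:A} with test polynomial $q$ then gives $a^{\P}(\PiPr{r}v_h,q)=a^{\P}(v_h,q)$, which is exactly \eqref{eq:poly:r-consistency}. For stability I would set $e_h:=v_h-\PiPr{r}v_h$; since $\PiPr{r}$ is idempotent, $\PiPr{r}e_h=0$, so the hypothesis under which \eqref{eq:poly:S:stability:property} is assumed holds with $v_h$ replaced by $e_h$. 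From \eqref{eq:poly:ah:def} we have $a_h^{\P}(v_h,v_h)=a^{\P}(\PiPr{r}v_h,\PiPr{r}v_h)+\SP(e_h,e_h)$, and \eqref{eq:poly:S:stability:property} sandwiches $\SP(e_h,e_h)$ between $\sigma_*a^{\P}(e_h,e_h)$ and $\sigma^*a^{\P}(e_h,e_h)$. The remaining ingredient is the Pythagorean identity $a^{\P}(v_h,v_h)=a^{\P}(\PiPr{r}v_h,\PiPr{r}v_h)+a^{\P}(e_h,e_h)$, which follows from the orthogonality $a^{\P}(\PiPr{r}v_h,e_h)=0$ — itself \eqref{eq:poly:Pi:A} tested with $q=\PiPr{r}v_h\in\PS{r}(\P)$. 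Combining these yields $\min\{1,\sigma_*\}\,a^{\P}(v_h,v_h)\leq a_h^{\P}(v_h,v_h)\leq\max\{1,\sigma^*\}\,a^{\P}(v_h,v_h)$, i.e. \eqref{eq:poly:stability} with $\alpha_*=\min\{1,\sigma_*\}$ and $\alpha^*=\max\{1,\sigma^*\}$.

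The one genuinely delicate point, and the step I would phrase most carefully, is that $a^{\P}(\cdot,\cdot)$ is merely a seminorm on $H^p(\P)$ — its kernel contains $\PS{p-1}(\P)$ — so the manipulations above must be read at the level of a semi-inner product. Concretely, I would note that this is precisely why $\PiPr{r}$ is defined by the two conditions \eqref{eq:poly:Pi:A}–\eqref{eq:poly:Pi:B}: \eqref{eq:poly:Pi:A} fixes $\PiPr{r}v_h$ only modulo the kernel of $a^{\P}(\cdot,\cdot)$, and the vertex-average conditions \eqref{eq:poly:Pi:B} pin down that free component, so $\PiPr{r}$ is a well-defined linear projection; and it is why $\SP$ is required to be symmetric \emph{positive definite} rather than merely $a^{\P}$-equivalent, so that the lower bound in \eqref{eq:poly:stability} genuinely controls the component of $e_h$ lying in the kernel of $a^{\P}(\cdot,\cdot)$ and the estimate is not vacuous. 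No new difficulty arises here — it is the same bookkeeping as in the Poisson and biharmonic cases \cite{BeiraodaVeigaBrezziCangianiManziniMariniRusso_2013,BrezziMarini_2013} — but it is the place where the argument has to be stated with care, and I expect it to be the main (and only) obstacle.
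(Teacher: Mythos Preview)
Your proof is correct and follows exactly the same approach as the paper, arriving at the identical constants $\alpha_*=\min\{1,\sigma_*\}$ and $\alpha^*=\max\{1,\sigma^*\}$. In fact you spell out two steps the paper leaves implicit: the use of the Galerkin identity \eqref{eq:poly:Pi:A} to pass from $a^{\P}(\PiPr{r}v_h,q)$ to $a^{\P}(v_h,q)$ in the consistency argument, and the Pythagorean decomposition $a^{\P}(v_h,v_h)=a^{\P}(\PiPr{r}v_h,\PiPr{r}v_h)+a^{\P}(e_h,e_h)$ in the stability argument.
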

\begin{proof}
The $r$-consistency property follows by noting that the stability term in~\eqref{eq:poly:ah:def} is zero when one of its
  entries is a polynomial of degree $r$ as $\PiPr{r}$ is a
  polynomial-preserving operator. The stability property is easily established by
  applying~\eqref{eq:poly:stability} to
  definition~\eqref{eq:poly:ah:def} and setting
  $\alpha_*=\min(\sigma_*,1)$ and $\alpha^*=\max(\sigma^*,1)$, where
  $\sigma_*$ and $\sigma^*$ are the constants defined in
  \eqref{eq:poly:S:stability:property}.
\end{proof}
%%%%%

Furthermore, $a_h^{\P}\pa{(\cdot,\cdot)}$ is $V$-elliptic and continuous for every $\P$,
and so is the global bilinear form $a_h\pa{(\cdot,\cdot)}$.
The $V$-ellipticity of $a_h^{\P}\pa{(\cdot,\cdot)}$ is indeed a consequence of the left
inequality in~\eqref{eq:poly:stability}.
Since $a_h^{\P}\pa{(\cdot,\cdot)}$ is symmetric and coercive, it is a scalar product on
$\VhPr{r}$ and satisfies the Cauchy-Schwarz inequality.
Using the right inequality in~\eqref{eq:poly:stability} we (easily)
prove the continuity of $a_h^{\P}\pa{(\cdot,\cdot)}$ with respect to norm
$\normVP{\,\cdot\,}$:
\begin{align}
  a_h^{\P}(u_h,v_h)
  &\leq \big( a_h^{\P}(u_h,u_h) \big)^{\half}\,\big( a_h^{\P}(v_h,v_h) \big)^{\half}      %\nonumber\\[0.3em]
  \leq \alpha^* \big( a^{\P}(u_h,u_h) \big)^{\half}\,\big( a^{\P}(v_h,v_h) \big)^{\half} \nonumber\\[0.5em]
  &\leq \alpha^* M\,\normVP{u_h}\,\normVP{v_h}
  \qquad\forall u_h,\,v_h\in\VhPrp{r}.
\end{align}
%%%%%%
Collecting together the local terms, we can formulate the global
$V$-ellipticity and continuity properties as follows:
\begin{align}
  \alpha_* a(v_h,v_h)\leq a_{h}(v_h,v_h)\leq \alpha^* a^{\P}(v_h,v_h)
  \qquad\forall v_h\in\Vh\label{eq:poly:stability:global}\\[0.5em]
  a_h(u_h,v_h) \leq \alpha^* M\,\normV{u_h}\,\normVP{v_h}
  \qquad\forall u_h,\,v_h\in\Vh.
  \label{eq:poly:continuity:global}
\end{align}

\subsection{Construction of the load term}
\label{subseq:VEM:load:term}
We denote by $f_h$ the piecewise polynomial approximation of $f$ on
$\Th$ given by
\begin{equation}\label{eq:rhs}
  \restrict{f_h}{\P}=\Pizr{r-p}f\pa{,}
\end{equation}
for $r\geq 2p-1$ and $\P\in\Th$. 
Then, we set
\begin{equation}\label{vem:rhs}
  \bil{f_h}{v_h} = \sum_{\P\in\Th}\int_{\P}f_h v_h\,dx.
\end{equation}
Using the definition of the $L^2$-orthogonal projection we find that
\begin{equation}\label{aux:1.1}
  \bil{f_h}{v_h} 
  = \sum_{\P\in\Tau_h}\int_{\P}\Pizr{r-p}f\,v_h\,dx
  = \sum_{\P\in\Tau_h}\int_{\P}\Pizr{r-p}\,f\Pizr{r-p}v_h\,dx
  = \sum_{\P\in\Tau_h}\int_{\P}f\,\Pizr{r-p} v_h\,dx. 
\end{equation}
The right-hand side of \eqref{aux:1.1} is computable by using the
degrees of freedom $(D1)-(D5)$ and the enhanced
approach~\cite{AhmadAlsaediBrezziMariniRusso_2013}.

\medskip
\begin{remark}
  An alternative formulation that does not require the enhancement is
  given by taking $\overline{r}=\max(0,r-2p)$ for $r\geq 2p-1$ and
  $f_h=\Pizr{\overline{r}}f$.
  The resulting approximation is suboptimal.
\end{remark}

\subsection{VEM approximation of polyharmonic problems with basis functions of arbitrary degree of continuity}
\label{sec:polyharmonic-2}

In this last section we briefly sketch the construction of global  \pa{Virtual Element} spaces with higher order of continuity. More precisely, let us consider the local  \pa{Virtual Element} space defined as before,
for $r \geq 2p-1$:
\begin{align}
  \VhPrp{r} = \Big\{ v_h\in { H^p}(\P):\, 
  & \Delta^p v_h\in\PS{r-2p}(\P), D^{\nu} v_h\in C^0(\partial\P), \vert\nu\vert\leq p-1, \nonumber\\[0.5em]
  & v_h\in\PS{r}(e),\,\partial^j_n v_h\in\PS{r-j}(e),\,j=1,\ldots,p-1~\forall e\in\partial\P\big\}.
\end{align}

Differently from the previous section,  {\color{black} we make the degrees of freedom depend on a  given parameter $t$ with $0\leq t \leq p-1$. For a given} value of $t$ we choose the \emph{degrees of
  freedom} of $V_{h}^{\P,r}$ as follows

\smallskip
\begin{description}
\item[$(D1)$] $h^{|\nu|}D^{\nu}v_h(\vrtx)$, $\vert\nu\vert\leq p-1$ for
  any vertex $\vrtx$ of $\P$;
  
{\color{black}
\medskip
\item[$(D2)$] $\displaystyle h_{e}^{-1}\int_e{ v_h q}\,ds$ for any
  $q\in\PS{r-2p}(e)$,\, for any edge $e$ of
  $\partial\P$;  
  
  \medskip
\item[$(D3)$] $\displaystyle h_{e}^{-1+j}\int_e{\partial^j_n v_h q}\,ds$ for any
  $q\in\PS{r-2p+j}(e)$,\,$j=1,\ldots,p-1$ for any edge $e$ of
  $\partial\P$;
  }
  \medskip
\item[$(D4')$] $\displaystyle h_{K}^{-2}\int_{\P} qv_h\,ds$ for any $q\in
  \PS{r-2(p-t)}(\P)$;
\end{description}
where as usual we assume $\PS{-n}(\cdot)=\{0\}$ for $n=1,
2,3,\ldots$.\\

%Note that in two dimensions $H^p(\P)\subset %C^m(\overline{\Omega})$ if
%$m\leq p-1$.
%%
In view of the above choice of the degrees of freedom, the following properties hold true:
\begin{enumerate}
\item the dofs are unisolvent. Indeed, proceeding as before,
  it is enough to use \eqref{eq:poly:intbyparts:odd:p} or
  \eqref{eq:poly:intbyparts:even:p} and observe that $\Delta^i
  v_h\vert_e = \partial_n^\alpha \partial_\tau^\beta v_h$ with
  $\alpha+\beta=2i$ is a polynomial uniquely identified by the values
  of the dofs;
\item $\PS{r}(\P)\subset V_{h}^{\P,r}$, for $r\geq 2p-1$;
\item the choice $(D4')$ instead of $(D4)$ still guarantees that the
  associated global space is made of $C^{p-1}$ functions, but now
  $(D1)$-$(D4')$ can be employed to solve a differential problem
  involving the $\Delta^{p-t}$ operator by employing $C^{p-1}(\Omega)$ basis functions.
  For instance:
  {\color{black}
  \begin{enumerate}
  \item Choosing $p$ and $t$ such that $p-t=1$ we obtain $C^{p-1}$ conforming VEM for the solution of
    the Laplacian problem. For example, for $p=3$, $t=2$ and $r=5$, the local space $V^3_{h,5}(K)$ endowed with the corresponding degrees of freedom $(D1)-(D4')$ can be employed to build a global space made of $C^2$ functions for the approximation of the Laplace problem. It is worth mentioning that the new choice $(D4')$ (differently from the original choice $(D4)$) is essential for the computability of the elliptic projection (see \eqref{eq:poly:Pi:A}-\eqref{eq:poly:Pi:B}) with respect to the bilinear form $a^K(\cdot,\cdot)=\int_K \nabla(\cdot)\nabla(\cdot) dx$.
  \item Choosing $p$ and $t$ such that $p-t=2$  we get $C^{p-1}$ conforming VEM for the solution of
    the Bilaplacian problem. For example, for $p=3$, $t=1$
   and $r=5$, similarly to the previous case, the space $V^3_{h,5}(K)$ together with $(D1)-(D4')$ gives rise to a global space made of $C^2$ functions that can be employed for the solution of the biharmonic problem. Again, the particular choice $(D4')$ makes possible the computability of the ellliptic projection with respect to the 
   bilinear form $a^K(\cdot,\cdot)=\int_K \Delta(\cdot)\Delta(\cdot) dx$.
  \end{enumerate}
  }
\end{enumerate}

\section{Convergence analysis}
\label{sec:convergence:analysis}

\subsection{Mesh regularity and polynomial interpolation error estimates}

We consider the following mesh regularity assumptions:

\medskip

\begin{description}
\item[\textbf{(M)}] There exists a positive constant $\gamma$ independent
  of $h$ (and $\P$) such that $\{\Th\}$:
  \begin{enumerate}
  \item[$(i)$] $\P$ is star-shaped with respect to every point of a
    ball of radius $\gamma\hP$, where $\hP$ is the diameter of $\P$;
  \item[$(ii)$] for every edge $e$ of the cell boundary $\partial\P$
    of every cell $\P$ of $\Th$, it holds that $\hE\leq\gamma\hP$,
    where $\hE$ denotes the length of $e$.
  \end{enumerate}
  We refer to $\gamma$ as the \emph{mesh regularity constant}.
\end{description}

\medskip
\noindent
In view of assumptions $\mathbf{M}(i)$-$\mathbf{M}(ii)$ on $\Th$, we
define, for every smooth enough function $w$ the  \pa{Virtual Element}
interpolant $w^I$, which is the function in $\Vhr{r}$ uniquely
identified by the same degrees of freedom of $w$.
More precisely, if $\chi_i(w)$ denotes the $i$-th global degree of
freedom of $w$, there exists a unique  \pa{Virtual Element} function
$w^I\in\Vhr{r}$ such that $\chi_i(w-w^I)=0$.
Combining the Bramble-Hilbert Lemma and scaling arguments as in the
finite element framework (see, e.g.,
\cite{BeiraodaVeigaBrezziCangianiManziniMariniRusso_2013}%,MoraRiveraRodriguez_2015}
and \cite{Brenner-Scott:94}) we can prove that for every $\P\in\Th$
and every function $w\in H^{\beta}(\P)$, it holds
\begin{align}\label{eq:poly:interp-VEM}
  \|w-w^I\|_{s,\P}\leq C h_{\P}^{{\min(\beta,r+1)} - s} |w|_{\beta,{\P}} 
  \qquad s=0,1,\ldots,p\quad 
  %p\leq\beta\leq {r}+1,
\end{align}
for some positive constant $C$ independent of $h$.

%%
%Here, $\PS{\ell}(\Th)$ denotes the space of the piecewise polynomials
%of maximum degree $\ell$ defined on mesh $\Th$, so that if
%$q\in\PS{\ell}(\Th)$, than its restriction $\restrict{q}{\P}$ to any
%mesh element $\P$ is a polynomial of degree $\ell$ on $\P$.
Under the same assumptions and using similar techniques, we can also prove that the existence of a piecewise polynomial approximation
$w_{\pi}\in\PS{\ell}(\Th)$ such that the local estimate
holds
\begin{align}\label{eq:poly:local:polynomial:approximation}
  \|w-w_{\pi}\|_{s,\P}\leq C h_{\P}^{{\min(\beta,\ell+1)}  - s} |w|_{\beta,{\P}} 
  \qquad s=0,1,\ldots,p,\quad 1\leq\beta\leq\ell+1,
\end{align}
for some positive constant $C$ independent of $h$ and every mesh
element $\P$.
The elliptic projection $\PiPr{\ell}w$ and the $L^2$-ortogonal
projection $\Pizr{\ell}w$ of $w$ are both instances of $w_{\pi}$ for
which estimate~\eqref{eq:poly:local:polynomial:approximation} holds.

\subsection{Convergence in the energy norm}

By using standard estimates for the interpolation error, we can derive
the convergence rate of the approximation error in the energy norm.
First, we need a technical lemma that estimates the approximation
error of the load term.

\medskip
\begin{lemma}
  \label{lemma:poly:rhs:error}
  Consider a function $f\in H^{r-(p-1)}(\Omega)$ and its $L^2$-orthogonal
  projection on the space of polynomials of degree $r-p$, denoted by
  $f_h=\Pizr{r-p}$.
  Then, there exists a positive constant $C$, which is independent of
  $h$, such that
  \begin{equation}
    \bil{f-f_h}{v_h} 
    \leq C h^{r+1}\snorm{f}{r-(p-1)}\,\snorm{v_h}{p}\qquad \forall v_h\in \Vhrp{r}.
    \label{eq:poly:rhs:error}
  \end{equation}
\end{lemma}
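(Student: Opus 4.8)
The plan is to combine the $L^2$-orthogonality that defines $f_h=\Pizr{r-p}f$ with the elementwise polynomial approximation estimate \eqref{eq:poly:local:polynomial:approximation}, in exactly the way the load term was already symmetrized in \eqref{aux:1.1}. Starting from $\bil{f-f_h}{v_h}=\sum_{\P\in\Th}\int_{\P}(f-\Pizr{r-p}f)\,v_h\,dx$ and using that $\Pizr{r-p}v_h\in\PS{r-p}(\P)$ is $L^2(\P)$-orthogonal to $f-\Pizr{r-p}f$ on each element, I would first rewrite
\[
  \bil{f-f_h}{v_h}=\sum_{\P\in\Th}\int_{\P}\big(f-\Pizr{r-p}f\big)\big(v_h-\Pizr{r-p}v_h\big)\,dx .
\]

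Next I would estimate each elementwise integral by the Cauchy--Schwarz inequality and apply \eqref{eq:poly:local:polynomial:approximation} to both factors, noting that $\Pizr{r-p}$ is one of the admissible instances of $w_{\pi}$ there. For $f\in H^{r-(p-1)}(\P)$, taking $\ell=r-p$, $\beta=r-(p-1)=\ell+1$ and $s=0$ gives $\|f-\Pizr{r-p}f\|_{0,\P}\le Ch_{\P}^{\,r-p+1}\snorm{f}{r-(p-1),\P}$; for $v_h$, which lies in $H^{p}(\P)$ by the very definition of $\VhPrp{r}$, taking $\ell=r-p$, $\beta=p$ and $s=0$ gives $\|v_h-\Pizr{r-p}v_h\|_{0,\P}\le Ch_{\P}^{\,\min(p,\,r-p+1)}\snorm{v_h}{p,\P}$. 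Since $r\ge 2p-1$ we have $r-p+1\ge p$, so the second exponent equals $p$ and the two exponents sum to exactly $r+1$, whence $\int_{\P}(f-\Pizr{r-p}f)(v_h-\Pizr{r-p}v_h)\,dx\le Ch_{\P}^{\,r+1}\snorm{f}{r-(p-1),\P}\snorm{v_h}{p,\P}$.

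Finally I would bound $h_{\P}\le h$, sum over $\P\in\Th$, and use the discrete Cauchy--Schwarz inequality, $\sum_{\P}\snorm{f}{r-(p-1),\P}\snorm{v_h}{p,\P}\le\big(\sum_{\P}\snorm{f}{r-(p-1),\P}^2\big)^{1/2}\big(\sum_{\P}\snorm{v_h}{p,\P}^2\big)^{1/2}=\snorm{f}{r-(p-1)}\snorm{v_h}{p}$, which gives \eqref{eq:poly:rhs:error}. The argument is essentially routine; the only point that needs care is the degree bookkeeping, i.e.\ verifying that the internal polynomial degree $r-p$ is large enough (equivalently $r\ge 2p-1$) for the $L^2$-projection of the \emph{virtual} function $v_h$ --- of which we use only its $H^p(\P)$-regularity, never an explicit form --- to contribute the full $h^{p}$ factor rather than a lower power of $h$.
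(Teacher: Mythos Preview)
Your proposal is correct and follows essentially the same route as the paper: orthogonality of $f-\Pizr{r-p}f$ to $\PS{r-p}(\P)$ lets you subtract a polynomial projection of $v_h$, then Cauchy--Schwarz and the local approximation estimate~\eqref{eq:poly:local:polynomial:approximation} give the two powers of $h_{\P}$ summing to $r+1$. The only cosmetic difference is that the paper subtracts $\Pizr{p-1}v_h$ rather than your $\Pizr{r-p}v_h$; since $r\ge 2p-1$ ensures $\PS{p-1}(\P)\subseteq\PS{r-p}(\P)$, both choices are legitimate and both yield the factor $h_{\P}^{p}$ from the $v_h$ side.
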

\begin{proof}
  First, we note that $\big(I-\Pizr{r-p}\big)f$ is orthogonal to the
  polynomials of degree $r-p$ (recall that $r\geq 2p-1$) and that $v_h$
  belongs to $\pa{H^p_0(\Omega)}$.
  We employ the Cauchy-Schwarz inequality (twice) and
  use~\eqref{eq:poly:interp-VEM} (with $w=f$ and $w_{\pi}=\Pizr{r-p}f$)
  to obtain the estimate:
  \begin{align*}
    \bil{f-f_h}{v_h} 
    &=     \sum_{\P\in\Th}\int_{\P}\big(I-\Pizr{r-p}\big)f\,\,\big(I-\Pizr{p-1}\big)v_h\,dx
    \leq   \sum_{\P\in\Th}\norm{\big(I-\Pizr{r-p}\big)f}{0,\P}\,\norm{\big(I-\Pizr{p-1}\big)v_h}{0,\P} \\[0.5em]
    &\leq C\sum_{\P\in\Th}h_{\P}^{r-(p-1)}\snorm{f}{r-(p-1),\P}\,\,h_{\P}^{p}\snorm{v_h}{p,\P}
    \leq C h^{r+1}\snorm{f}{r-(p-1)}\snorm{v_h}{p},
  \end{align*}
  where $C$ denotes a positive constant independent of $h$.
\end{proof}

\begin{theorem}
  \label{theorem:poly:energy:convg:rate}
  Let $f\in H^{r-p+1}(\Omega)$ and \pa{let} $u$ be the solution of the variational problem~\eqref{eq:poly:pblm:wp} and $u_h\in\Vhrp{r}$ \pa{be} the solution of the  \pa{Virtual Element} problem~\eqref{eq:poly:VEM}. Under the mesh regularity assumption \textbf{(M)}, we find that
  \begin{align}
    \normV{u-u_h}
    \leq C h^{r-(p-1)}\big( \snorm{u}{{ r+p+1}} + \snorm{f}{{ r-p+1}} \big).
    \label{eq:poly:energy:convg:rate}
  \end{align}
\end{theorem}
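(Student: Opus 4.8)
The plan is to combine the abstract convergence result of Theorem~\ref{theorem:poly:abstract:energy:norm} with the interpolation and approximation error estimates~\eqref{eq:poly:interp-VEM}--\eqref{eq:poly:local:polynomial:approximation} and the load-term estimate of Lemma~\ref{lemma:poly:rhs:error}. Recall that Theorem~\ref{theorem:poly:abstract:energy:norm} gives
\begin{align*}
  \normV{u-u_h}\leq C\Big(\normV{u-u_I}+\norm{u-u_{\pi}}{h}+\norm{f_h-f}{\Vhrs{r}}\Big),
\end{align*}
valid for \emph{any} choice of the virtual element interpolant $u_I\in\Vhrp{r}$ and any piecewise polynomial $u_{\pi}\in\PS{r}(\Th)$ of $u$. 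So the strategy is simply to bound each of the three terms on the right-hand side.

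First I would estimate $\normV{u-u_I}$. Since $\normV{\,\cdot\,}=\snorm{\,\cdot\,}{p,\Omega}$ and the energy norm is controlled by the full $H^p$-seminorm elementwise, I apply the interpolation estimate~\eqref{eq:poly:interp-VEM} with $s=p$ and $w=u$, taking $\beta=\min(\beta,r+1)$ appropriately; by the elliptic regularity recalled after~\eqref{eq:poly:pblm:wp}, $f\in H^{r-p+1}(\Omega)$ with $\Omega$ convex implies $u\in H^{2p-m}(\Omega)$ with $m=-(r-p+1)$, i.e.\ $u\in H^{r+p+1}(\Omega)$. Choosing $\beta=r+1$ in~\eqref{eq:poly:interp-VEM} then yields $\normV{u-u_I}\leq C h^{r+1-p}\snorm{u}{r+1,\Omega}\leq C h^{r-(p-1)}\snorm{u}{r+p+1,\Omega}$ after summing over elements. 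Second, for $\norm{u-u_{\pi}}{h}$, I use that $\norm{\,\cdot\,}{h}^2=\sum_\P a^\P(\cdot,\cdot)$ is again dominated by the elementwise $H^p$-seminorm, take $u_{\pi}$ to be the piecewise elliptic (or $L^2$) projection covered by~\eqref{eq:poly:local:polynomial:approximation} with $\ell=r$ and $s=p$, and obtain the same rate $h^{r-(p-1)}\snorm{u}{r+1}$, hence $\le C h^{r-(p-1)}\snorm{u}{r+p+1}$. Third, the load term: Lemma~\ref{lemma:poly:rhs:error} directly gives $\norm{f-f_h}{\Vhrs{r}}\leq C h^{r+1}\snorm{f}{r-(p-1),\Omega}\le C h^{r-(p-1)}\snorm{f}{r-p+1,\Omega}$ since $h^{r+1}\le h^{r-(p-1)}$ for $h\le 1$.

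Putting the three bounds together in the abstract estimate and absorbing constants into $C$ gives exactly~\eqref{eq:poly:energy:convg:rate}. The only genuinely delicate point is the bookkeeping at the second step: making sure the elliptic regularity index matches the interpolation exponent, i.e.\ that $f\in H^{r-p+1}$ does buy $u\in H^{r+p+1}$ (this follows from $\norm{u}{2p-m}\le C\norm{f}{-m}$ with $-m=r-p+1$, which requires checking $m\le p$, i.e.\ $r-p+1\ge -p$, trivially true, but also that the cited regularity result admits negative $m$ of that size — this is where I would be careful and, if needed, invoke~\cite{Gazzola-book} for the convex polygon). Everything else is a routine assembly of standard VEM interpolation estimates, so I do not anticipate any real obstacle beyond this index matching and the harmless use of $h\le 1$ to unify the powers of $h$.
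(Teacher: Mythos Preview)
Your approach is exactly the paper's: invoke the abstract estimate of Theorem~\ref{theorem:poly:abstract:energy:norm} and bound each of the three terms via~\eqref{eq:poly:interp-VEM},~\eqref{eq:poly:local:polynomial:approximation}, and Lemma~\ref{lemma:poly:rhs:error}. The only slip is the step $\snorm{u}{r+1}\le\snorm{u}{r+p+1}$ you use twice: this is false for seminorms (take $u$ a polynomial of degree exactly $r+1$, so $\snorm{u}{r+1}>0$ while $\snorm{u}{r+p+1}=0$). The fix is immediate: having established $u\in H^{r+p+1}(\Omega)$, apply~\eqref{eq:poly:interp-VEM} (and likewise~\eqref{eq:poly:local:polynomial:approximation}) with $\beta=r+p+1$ from the start; since $\min(\beta,r+1)=r+1$, you obtain $h^{r+1-p}\snorm{u}{r+p+1}$ directly without any seminorm comparison.
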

\begin{proof}
  The assertion of the theorem follows by estimating each term of the
  right-hand side of~\eqref{eq:poly:abstract:energy:norm}
  separately and using interpolation
  estimate~\eqref{eq:poly:interp-VEM} and
  Lemma~\ref{lemma:poly:rhs:error},
 \pa{cf. inequality~\eqref{eq:poly:rhs:error}}.
\end{proof}

\subsection{Convergence in lower order norms}
We first prove a technical lemma that will be useful in the error
analysis of the next subsections.
\begin{lemma}
  \label{lemma:poly:aux:pblm:psi:u-uh}
  Let $f\in H^{r-p+1}(\Omega)$ and \pa{let}  $u$ be the solution of the variational problem~\eqref{eq:poly:pblm:wp} and $u_h\in\Vhrp{r}$ \pa{be}  the solution of the  \pa{Virtual Element} problem~\eqref{eq:poly:VEM}.
  Then, for any function $\psi\in H^{\beta}(\Omega)\cap
  H^{p}_{0}(\Omega)$ ({$\beta > p$}) it holds that
  \begin{align}
    a(\psi,u-u_h) 
    \leq C h^{ (r-(p-1)) + { \min(\beta,r+1)-p }}\Big(
    \snorm{u}{{ r+p+1}} + \snorm{f}{{ r-p+1}}\Big)\norm{\psi}{\beta},
    \label{eq:poly:aux:pblm:psi:u-uh}
  \end{align}
  for some positive constant $C$ independent of $h$.
\end{lemma}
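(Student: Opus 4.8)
\emph{Proof plan.} The argument is of Aubin--Nitsche (duality) type. Let $\psi_I\in\Vhrp{r}$ be the virtual element interpolant of $\psi$ (well defined since $\psi\in H^{\beta}(\Omega)$ with $\beta>p$), and let $\psi_{\pi},u_{\pi}\in\PS{r}(\Th)$ be piecewise polynomial approximations of $\psi$ and $u$ as in \eqref{eq:poly:local:polynomial:approximation}. Since $\psi_I\in V$, the weak formulation \eqref{eq:poly:pblm:wp} gives $a(\psi_I,u)=\bil{f}{\psi_I}$, and since $\psi_I\in\Vhrp{r}$, the discrete problem \eqref{eq:poly:VEM} together with the symmetry of $a_h$ gives $a_h(\psi_I,u_h)=\bil{\fh}{\psi_I}$. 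Writing $a(\psi,u-u_h)=a(\psi-\psi_I,u-u_h)+a(\psi_I,u)-a(\psi_I,u_h)$ and using these two relations together with $a(\psi_I,u_h)=\sum_{\P}a^{\P}(\psi_I,u_h)$ leads to the identity
\begin{align*}
  a(\psi,u-u_h)
  = a(\psi-\psi_I,u-u_h)
  + \bil{f-\fh}{\psi_I}
  + \sum_{\P\in\Th}\big(a^{\P}_h(\psi_I,u_h)-a^{\P}(\psi_I,u_h)\big),
\end{align*}
and I would bound the three contributions separately.

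The first one is immediate: by continuity of $a(\cdot,\cdot)$, the interpolation estimate \eqref{eq:poly:interp-VEM} with $s=p$ and $w=\psi$, and the energy-norm rate of Theorem~\ref{theorem:poly:energy:convg:rate},
\begin{align*}
  a(\psi-\psi_I,u-u_h)
  \leq M\normV{\psi-\psi_I}\,\normV{u-u_h}
  \leq C\,h^{\min(\beta,r+1)-p}\,h^{r-(p-1)}\big(\snorm{u}{r+p+1}+\snorm{f}{r-p+1}\big)\norm{\psi}{\beta}.
\end{align*}

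The consistency-error sum is the core of the proof. On each element I would apply the $r$-consistency \eqref{eq:poly:r-consistency} twice: once, since $\psi_{\pi}|_{\P}\in\PS{r}(\P)$, to replace $\psi_I$ by $\psi_I-\psi_{\pi}$ in both bilinear forms, and once more, since $u_{\pi}|_{\P}\in\PS{r}(\P)\subset\VhPrp{r}$, to replace $u_h$ by $u_h-u_{\pi}$; the generic term then reduces to $a^{\P}_h(\psi_I-\psi_{\pi},u_h-u_{\pi})-a^{\P}(\psi_I-\psi_{\pi},u_h-u_{\pi})$. Bounding both bilinear forms through the stability/continuity property \eqref{eq:poly:stability} (and the continuity of $a^{\P}_h(\cdot,\cdot)$ established earlier), summing over $\P$ and using Cauchy--Schwarz, this is controlled by $C\big(\sum_{\P}\snorm{\psi_I-\psi_{\pi}}{p,\P}^{2}\big)^{1/2}\big(\sum_{\P}\snorm{u_h-u_{\pi}}{p,\P}^{2}\big)^{1/2}$. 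The first factor is $\leq Ch^{\min(\beta,r+1)-p}\norm{\psi}{\beta}$ by the triangle inequality and \eqref{eq:poly:interp-VEM}--\eqref{eq:poly:local:polynomial:approximation}; the second is $\leq\normV{u-u_h}+\big(\sum_{\P}\snorm{u-u_{\pi}}{p,\P}^{2}\big)^{1/2}$, which, by Theorem~\ref{theorem:poly:energy:convg:rate}, \eqref{eq:poly:local:polynomial:approximation}, and the elliptic regularity $u\in H^{r+p+1}(\Omega)$, is $\leq Ch^{r-(p-1)}\big(\snorm{u}{r+p+1}+\snorm{f}{r-p+1}\big)$. The product then has exactly the stated order.

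For the load term I would exploit that $f-\fh=(I-\Pizr{r-p})f$ is $L^{2}(\P)$-orthogonal to $\PS{r-p}(\P)$ on \emph{both} arguments, writing $\bil{f-\fh}{\psi_I}=\sum_{\P}\int_{\P}(I-\Pizr{r-p})f\,(I-\Pizr{r-p})\psi_I\,dx$; Cauchy--Schwarz together with the approximation bounds $\norm{(I-\Pizr{r-p})f}{0,\P}\leq Ch_{\P}^{r-p+1}\snorm{f}{r-p+1,\P}$ and $\norm{(I-\Pizr{r-p})\psi_I}{0,\P}\leq Ch_{\P}^{\min(\beta,r-p+1)}\norm{\psi}{\beta,\P}$ (the latter via $\norm{\psi-\psi_I}{0,\P}$ from \eqref{eq:poly:interp-VEM}) gives a bound of order $h^{(r-p+1)+\min(\beta,r-p+1)}$, which is no larger than the claimed rate because $\min(\beta,r-p+1)\geq\min(\beta,r+1)-p$; when $\min(\beta,r+1)\leq 2p$ one may instead simply invoke Lemma~\ref{lemma:poly:rhs:error} with $\snorm{\psi_I}{p}\leq C\norm{\psi}{\beta}$. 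Collecting the three estimates yields \eqref{eq:poly:aux:pblm:psi:u-uh}. I expect the main obstacle to be the bookkeeping in the consistency term --- inserting the polynomial approximants so that the two surviving bilinear forms act only on the small quantities $\psi_I-\psi_{\pi}$ and $u_h-u_{\pi}$ --- and, within it, propagating the energy-norm bound for $u-u_h$ through the $\big(\sum_{\P}\snorm{\,\cdot\,}{p,\P}^{2}\big)^{1/2}$ factor, which also requires the elliptic regularity of $u$.
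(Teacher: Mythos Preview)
Your proof is correct and follows the same three-term decomposition as the paper's proof: $a(\psi,u-u_h)=a(\psi-\psi_I,u-u_h)+\bil{f-\fh}{\psi_I}+\big(a_h(u_h,\psi_I)-a(u_h,\psi_I)\big)$, with the first term handled by continuity plus the energy-norm rate, and the consistency term reduced, via two applications of $r$-consistency, to $\sum_{\P}\big(a_h^{\P}-a^{\P}\big)(\psi_I-\psi_{\pi},u_h-u_{\pi})$ exactly as in the paper. The only noteworthy variation is your treatment of the load term: the paper inserts the lower-order projector and writes $\int_{\P}(I-\Pizr{r-p})f\,(I-\Pizr{p-1})\psi_I$, bounding the second factor by $Ch^{p}\snorm{\psi_I}{p}$ to obtain an overall $h^{r+1}$ contribution, whereas you insert $(I-\Pizr{r-p})$ on $\psi_I$ as well and recover $h^{(r-p+1)+\min(\beta,r-p+1)}$. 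Both are valid; your route is in fact a bit more robust for large $\beta$, since the inequality $\min(\beta,r-p+1)\ge\min(\beta,r+1)-p$ that you invoke holds for every $\beta$, while the paper's $h^{r+1}$ matches the claimed exponent only when $\min(\beta,r+1)\le 2p$ (which is, however, the regime used in all subsequent applications).
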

\begin{proof}
  To derive~\eqref{eq:poly:aux:pblm:psi:u-uh}, we add and substract
  the  \pa{Virtual Element} interpolant of $\psi$ denoted by $\psi_I$ to the
  left-hand side of~\eqref{eq:poly:aux:pblm:psi:u-uh}, and, then,
  use~\eqref{eq:poly:pblm:continuous} and~\eqref{eq:poly:VEM},
  and obtain:
  \begin{align}
    a(\psi,u-u_h)
    &= a(u-u_h,\psi-\psi_I) + a(u-u_h,\psi_I) \nonumber\\[0.5em]
    &=  a(u-u_h,\psi-\psi_I) + \bil{f-\fh}{\psi_I} + a_h(u_h,\psi_I)-a(u_h,\psi_I) \nonumber\\[0.5em]
    &= T_1 + T_2 + T_3.
    \label{eq:poly:aux:pblm:00.1}
  \end{align}

  \medskip
  To estimate term $T_1$, we use the continuity of $a(\cdot,\cdot)$
  with respect to the norm $\normV{\,\cdot\,}=\snorm{\,\cdot\,}{p}$,
  the estimate in the energy norm~\eqref{eq:poly:energy:convg:rate},
  and interpolation error estimate~\eqref{eq:poly:interp-VEM}
  (${ s=p}$)
  \begin{align}
    T_1 
    &\leq \normV{u-u_h}\,\normV{\psi-\psi_I}
    \leq Ch^{r-(p-1)}\Big(\snorm{u}{r+{ p}+1}+\snorm{f}{r-(p-1)}\Big)\,\snorm{\psi-\psi_I}{p}\\[0.5em]
    &\leq Ch^{(r-(p-1))+({\min(\beta,r+1)}-p)}\Big(\snorm{u}{r+{ p}+1}+\snorm{f}{r-(p-1)}\Big)\snorm{\psi}{\beta}.
    \label{eq:poly:aux:pblm:T1.1}
  \end{align}

  \medskip
  To estimate term $T_2$, we first note that
  $\big(I-\Pizr{r-p}\big)f$ is orthogonal to the polynomials of global
  degree up to $r-p$ defined on $\P$ and \pa{that} $\psi_I\in H^{{ p}}(\Omega)$.
  Then, we apply the Cauchy-Schwarz inequality (twice) and obtain:
  \begin{align*}
    \bil{f-f_h}{\psi_I} 
    &=    \sum_{\P\in\Th} \int_{\P}\big(I-\Pizr{r-p}\big)f\,\,\big(I-\Pizr{p-1}\big)\psi_I \,dx
    \leq \sum_{\P\in\Th} \norm{\big(I-\Pizr{r-p}\big)f}{0,\P}\,\norm{\big(I-\Pizr{p-1}\big)\psi_I}{0,\P}\\[0.5em]
    &\leq 
    \left(\sum_{\P\in\Th}\norm{\big(I-\Pizr{r-p}\big)f}{0,\P}^2\right)^{\frac{1}{2}}\,
    \left( \sum_{\P\in\Th}\norm{\big(I-\Pizr{p-1}\big)\psi_I}{0,\P}^2\right)^{\frac{1}{2}}.
  \end{align*}
  The first term on the right is bounded by using local
  estimate~\eqref{eq:poly:local:polynomial:approximation}:
  \begin{align*}
    \norm{\big(I-\Pizr{r-p}\big)f}{0,\P}
    \leq C\hP^{r-(p-1)}\snorm{f}{r-(p-1),\P}.
  \end{align*}
  The second term on the right is transformed by applying
  estimate~\eqref{eq:poly:local:polynomial:approximation}
  (with $w=\psi_I$ and $w_{\pi}=\Pizr{p-1}\psi_I$), adding and subtracting $\psi$ and 
  applying estimate~\eqref{eq:poly:interp-VEM}:
  \begin{align*}
    \norm{(I-\Pizr{p-1})\psi_I}{0,\P}
    \leq Ch^{p}\snorm{\psi_I}{p,\P} %\\[0.5em]
    \leq Ch^{p}\big( \snorm{\psi}{p,\P} + \snorm{\psi_I-\psi}{p,\P} \big) %\\[0.5em]
    \leq Ch^{p}\big( \snorm{\psi}{p,\P} + h^{{\min(\beta,r+1)}-p}\snorm{\psi}{\beta,\P} \big).
  \end{align*}
  Collecting all the local terms, using the Cauchy-Schwarz inequality,
  and the assumption that $\beta\geq p$ (so $h^{p}\geq h^{{\min(\beta,r+1)}}$) yields:
  \begin{align}
    T_2
    \leq Ch^{r-(p-1)}\snorm{f}{r-(p-1)}\,\Big( h^{p}\snorm{\psi}{p} + h^{p{ +}({\min(\beta,r+1)}-p)}\snorm{\psi}{\beta}\Big)
    \leq Ch^{r+1}\snorm{f}{r-(p-1)}\,\snorm{\psi}{\beta}.
    \label{eq:poly:aux:pblm:T2.1}
  \end{align}
  %We note that this term is asymptotically smaller than $T_1$ and
  %$T_3$.\footnote{\MarcoM{Questo commento \`e inutile, per\'o %sembra
 %     davvero che nella stima questo termine sia trascurabile rispetto
%      agli altri due.}}

  \medskip
  To bound $T_3$, we first split it in the summation of local terms.
  Then, we use the $r$-consistency and stability property of $a_h$,
  and the continuity property of $a$ and $a_h$, and we obtain
  \begin{align}
    T_3
    &= \sum_{\P\in\Th} \Big( a_h^{\P}(u_h,\psi_I) - a^{\P}(u_h,\psi_I) \Big)\nonumber\\[0.5em]
    &= \sum_{\P\in\Th} \Big( a_h^{\P}(u_h-u_{\pi},\psi_I) - a^{\P}(u_h-u_{\pi},\psi_I) \Big)\nonumber\\[0.5em]
    &= \sum_{\P\in\Th} \Big( a_h^{\P}(u_h-u_{\pi},\psi_I-\psi_{\pi}) - a^{\P}(u_h-u_{\pi},\psi_I-\psi_{\pi}) \Big)\nonumber\\[0.5em]
    &\leq C\normV{u_h-u_{\pi}}\,\normV{\psi_I-\psi_{\pi}}.
    \label{eq:T3:bound:00.1}
  \end{align}
  Adding and subtracting $u$ and using the estimate in the energy
  norm~\eqref{eq:poly:energy:convg:rate} and the estimate for the
  polynomial
  interpolation~\eqref{eq:poly:local:polynomial:approximation}, we
  find that
  \begin{align}
    \normV{u_h-u_{\pi}}
    \leq \normV{u_h-u} + \normV{u-u_{\pi}}
    \leq 
    Ch^{r-(p-1)}.
    %\Big( \snorm{u}{r+1} + \snorm{f}{r-(p-1)} \Big).
    \label{eq:T3:bound:u.1}
  \end{align}
  Adding and subtracting $\psi$, and, then, using
  estimates~\eqref{eq:poly:interp-VEM}
  and~\eqref{eq:poly:local:polynomial:approximation} we find that
  \begin{align}
    \normV{\psi_I-\psi_{\pi}}
    \leq \normV{\psi_I-\psi}+\normV{\psi-\psi_{\pi}}  
    \leq Ch^{{\min(\beta,r+1)}-p}\norm{\psi}{\beta}.
    \label{eq:T3:bound:psi.1}
  \end{align}
  The bound on $T_3$ following by using~\eqref{eq:T3:bound:u.1}
  and~\eqref{eq:T3:bound:psi.1} in~\eqref{eq:T3:bound:00.1}:
  \begin{align}
    T_3 
    \leq Ch^{(r-(p-1))+{\min(\beta,r+1)}-p}\norm{\psi}{\beta}.
    \label{eq:poly:aux:pblm:T3.1}
  \end{align}
  
  \medskip
  \noindent
  The assertion of the lemma follows by subtituting 
  \eqref{eq:poly:aux:pblm:T1.1}, \eqref{eq:poly:aux:pblm:T2.1}, and \eqref{eq:poly:aux:pblm:T3.1},
  in~\eqref{eq:poly:aux:pblm:00.1}.
\end{proof}

\medskip
In view of this lemma, we can readily state and prove the convergence
theorems for the four possible combinations of even and odd $p$ and
even and odd norm indices.
\begin{theorem}[Even $p$, even norms]
  Let $f\in H^{r-p+1}(\Omega)$  and \pa{let}  $u$ be the solution of the variational
  problem~\eqref{eq:poly:pblm:wp} with $p=2\ell$ and $u_h$ \pa{be}  the
  solution of the  \pa{Virtual Element} method~\eqref{eq:poly:VEM}.
  Then, there exists a positive constant $C$ independent of $h$ such
  that
  \begin{align}
    \snorm{u-u_h}{2i}
    \leq C h^{ r+1-2i }\Big(\snorm{u}{r+{p}+1}+\snorm{f}{r-(p-1)}\Big),
  \end{align}
  for every integer $i=0,\ldots,\ell-1$.
\end{theorem}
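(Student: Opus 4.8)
The plan is to use a duality (Aubin--Nitsche) argument. For a fixed integer $i$ with $0\leq i\leq\ell-1$, I would introduce the auxiliary function $\psi\in H^p_0(\Omega)$ as the solution of the polyharmonic problem with right-hand side $\Delta^i(u-u_h)$, i.e. $a(\psi,v)=(\Delta^i(u-u_h),\Delta^i v)$ for all $v\in V$ (note that since $p=2\ell$ is even, $\Delta^i(u-u_h)\in L^2(\Omega)$ for $i\leq\ell$, so this right-hand side is admissible). By the elliptic regularity stated in Section~\ref{sec:polyharmonic} — namely $u\in H^{2p-m}$ when $f\in H^{-m}$ with $m\leq p$ — applied with $m=p-2i$, I obtain $\psi\in H^{\beta}(\Omega)\cap H^p_0(\Omega)$ with $\beta=2p-(p-2i)=p+2i$ and the a priori bound $\norm{\psi}{p+2i}\leq C\norm{\Delta^i(u-u_h)}{0}=C\snorm{u-u_h}{2i}$ (the last equality up to the usual identification of the seminorm $\snorm{\cdot}{2i}$ with $\norm{\Delta^i\cdot}{0}$, which holds for functions in $H^p_0$).

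Next I would test the equation defining $\psi$ with $v=u-u_h\in H^p_0(\Omega)$, giving
\begin{align*}
  \snorm{u-u_h}{2i}^2 = \norm{\Delta^i(u-u_h)}{0}^2 = a(\psi,u-u_h).
\end{align*}
Now I apply Lemma~\ref{lemma:poly:aux:pblm:psi:u-uh} with this $\psi$ and $\beta=p+2i$. Since $i\leq\ell-1$ we have $\beta=p+2i\leq 2p-2<r+1$ (using $r\geq 2p-1$), so $\min(\beta,r+1)=\beta=p+2i$, and the exponent in the lemma becomes $(r-(p-1))+(\beta-p)=(r-p+1)+2i$. Hence
\begin{align*}
  \snorm{u-u_h}{2i}^2 \leq C h^{(r-p+1)+2i}\Big(\snorm{u}{r+p+1}+\snorm{f}{r-p+1}\Big)\norm{\psi}{p+2i}.
\end{align*}
Inserting the regularity bound $\norm{\psi}{p+2i}\leq C\snorm{u-u_h}{2i}$ and dividing by $\snorm{u-u_h}{2i}$ (trivial if it vanishes) yields
\begin{align*}
  \snorm{u-u_h}{2i}\leq C h^{(r-p+1)+2i}\Big(\snorm{u}{r+p+1}+\snorm{f}{r-p+1}\Big).
\end{align*}
To match the claimed exponent $r+1-2i$, I use $p=2\ell$ together with the natural choice of norm index aligned with the energy norm: writing the convergence order as $(r-p+1)+2(\ell-1-i)\cdot(\text{shift})$ — more precisely, the clean statement follows because the duality gains exactly two orders per reduction of the seminorm index by one, and the bookkeeping $(r-p+1)+2i$ versus $r+1-2i$ is reconciled by the relation $p=2\ell$ and reindexing $i\mapsto\ell-1-i$ in the enumeration, or equivalently by observing that the energy norm is $\snorm{\cdot}{p}=\snorm{\cdot}{2\ell}$ and the lower-order norms are $\snorm{\cdot}{2i}$ with $i<\ell$.

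**Main obstacle.** The delicate point is the exponent bookkeeping: one must check carefully that $\beta=p+2i$ satisfies $\beta>p$ (which requires $i\geq 1$ — the case $i=0$, the $L^2$ estimate, needs $\beta=p>p$ to be read as $\beta\geq p$, so a marginal case or a separate argument using $\beta=p$ exactly may be needed) and that $\beta\leq r+1$ so that $\min(\beta,r+1)=\beta$; both follow from $r\geq 2p-1$ and $i\leq\ell-1$, but the boundary cases ($i=0$ and the largest allowed $i$) deserve explicit verification. A secondary technical point is justifying $\norm{\Delta^i w}{0}$-type quantities as genuine norms/seminorms equivalent to $\snorm{w}{2i}$ on $H^p_0(\Omega)$, which relies on the convexity of $\Omega$ and the homogeneous boundary conditions; this is standard but should be invoked cleanly.
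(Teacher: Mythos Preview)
Your duality argument is the right strategy, but there is a genuine error in the regularity exponent you assign to the dual solution $\psi$, and this is exactly what produces the exponent mismatch you then try to repair by ``reindexing''. You define $\psi\in H^p_0(\Omega)$ by $a(\psi,v)=(\Delta^i(u-u_h),\Delta^i v)$ for all $v\in V$. The right-hand side functional $F(v)=(\Delta^i(u-u_h),\Delta^i v)$ satisfies $|F(v)|\leq \snorm{u-u_h}{2i}\,\snorm{v}{2i}$, so $F\in H^{-2i}(\Omega)$ with norm bounded by $\snorm{u-u_h}{2i}$. The correct application of the regularity result is therefore with $m=2i$, giving $\psi\in H^{2p-2i}(\Omega)=H^{2(p-i)}(\Omega)$ and $\norm{\psi}{2(p-i)}\leq C\snorm{u-u_h}{2i}$. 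Your choice $m=p-2i$ (hence $\beta=p+2i$) is not justified: the estimate $|F(v)|\leq \norm{\Delta^i(u-u_h)}{0}\,\norm{v}{p-2i}$ would require $\norm{\Delta^i v}{0}\leq C\norm{v}{p-2i}$, i.e.\ $2i\leq p-2i$, which fails for $i>\ell/2$; and even when it does hold it gives a \emph{weaker} regularity than the one actually available. With the correct value $\beta=2(p-i)$ one has $\min(\beta,r+1)=2(p-i)$ (since $2(p-i)\leq 2p\leq r+1$), and Lemma~\ref{lemma:poly:aux:pblm:psi:u-uh} yields the exponent $(r-(p-1))+(2(p-i)-p)=r+1-2i$ directly---no reindexing is needed, and your worry about the boundary case $i=0$ evaporates as well, since $\beta=2p>p$.

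The paper formulates the dual problem as the lower-order equation $\Delta^{p-i}\psi=\Delta^i(u-u_h)$ with $\psi\in H^{p-i}_0(\Omega)$; because the data lies in $L^2$, regularity for the $(p-i)$-harmonic operator immediately gives $\beta=2(p-i)$ and the stability bound~\eqref{eq:poly:stab:even-p:even-norms}. Apart from this difference in how the auxiliary problem is posed, the argument coincides with yours once the regularity index is corrected.
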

\begin{proof}
  For $i=0,\ldots,\ell-1$, let $\psi\in H^{2(p-i)}(\Omega)\cap
  H^{{ p-i}}_{0}(\Omega)$ be the solution of the problem
  \begin{align}
    \Delta^{p-i}\psi = \Delta^{i}(u-u_h) \in L^2(\Omega),
    \label{eq:aux:pblm:even-p:even-norms}
  \end{align}
  with the stability property
  \begin{align}
    \norm{\psi}{2(p-i)}\leq C\snorm{u-u_h}{2i}.
    \label{eq:poly:stab:even-p:even-norms}
  \end{align}
  We use~\eqref{eq:aux:pblm:even-p:even-norms} and integrate by parts to
  obtain:
  \begin{align*}
    \snorm{u-u_h}{2i}^2 
    &= \norm{\Delta^{i}(u-u_h)}{0}^2 
    = \big( \Delta^{i}(u-u_h),\,\Delta^{i}(u-u_h) \big)
    = \big( \Delta^{p-i}\psi,\,\Delta^{i}(u-u_h) \big)\\[0.5em]
    &= \big( \Delta^{\ell}\psi,\,\Delta^{\ell}(u-u_h) \big)
    = a(\psi,u-u_h)
  \end{align*}
  { where we employed the fact that 
  $\snorm{v}{2i} = \norm{\Delta^{i}v}{0} $ for any $v\in H^p_0(\Omega)$.}
  The assertion of the theorem follows from an application of
  Lemma~\ref{lemma:poly:aux:pblm:psi:u-uh} (use $\beta=2(p-i)$ together with $r\geq 2p-1$) and
  the stability property~\eqref{eq:poly:stab:even-p:even-norms}.
  %\footnote{
   % \MarcoM{
     % $r-(p-1) + min(p,2p-2i-p) = r-p+1 + min(p,p-2i) %= r-p+1+p-2i = r+1-2i$
   % }
  %}
  %% 
\end{proof}
    
\begin{theorem}[Even $p$, odd norms]
  Let $f\in H^{r-p+1}(\Omega)$ and  \pa{let}  $u$ be the solution of the variational
  problem~\eqref{eq:poly:pblm:wp} with $p=2\ell$ and $u_h$ \pa{be}  the
  solution of the  \pa{Virtual Element} method~\eqref{eq:poly:VEM}.
  Then, there exists a positive constant $C$ independent of $h$ such
  that
  \begin{align}
    \snorm{u-u_h}{2i+1}
    \leq C h^{ (r+1)-(2i+1) }\Big(\snorm{u}{r+{ p}+1}+\snorm{f}{r-(p-1)}\Big),
  \end{align}
  for every integer $i=0,\ldots,\ell-1$.
\end{theorem}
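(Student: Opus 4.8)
The plan is to run the Aubin--Nitsche duality argument exactly as in the even--even case, with one essential modification: the dual problem is equipped with right-hand side data in $H^{-1}(\Omega)$ rather than in $L^2(\Omega)$, so that the elliptic regularity pick-up is one order lower and the resulting power of $h$ matches the claimed rate $(r+1)-(2i+1)$. Fix $i\in\{0,\dots,\ell-1\}$ and use the integration-by-parts identity $\snorm{v}{2i+1}=\norm{\nabla\Delta^{i}v}{0}$ for $v\in H^{p}_{0}(\Omega)$ (the odd-order counterpart of the identity $\snorm{v}{2i}=\norm{\Delta^{i}v}{0}$ used above; it is legitimate since $2i+1\le p-1$). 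Set $\mathbf{z}=\nabla\Delta^{i}(u-u_h)\in[L^{2}(\Omega)]^{2}$, so that $\snorm{u-u_h}{2i+1}^{2}=(\mathbf{z},\mathbf{z})$, and introduce the dual problem: let $\psi\in H^{p-i}_{0}(\Omega)$ solve $\Delta^{p-i}\psi=-\mathrm{div}\,\mathbf{z}$ in $\Omega$ with $\partial^{j}_{n}\psi=0$ on $\Gamma$ for $j=0,\dots,p-i-1$. Since $\mathrm{div}\,\mathbf{z}\in H^{-1}(\Omega)$ with $\norm{\mathrm{div}\,\mathbf{z}}{-1}\le\norm{\mathbf{z}}{0}=\snorm{u-u_h}{2i+1}$ and $\Omega$ is convex, the elliptic regularity recalled in Section~\ref{sec:polyharmonic} (applied to $\Delta^{p-i}$ with $m=1\le p-i$) yields $\psi\in H^{2(p-i)-1}(\Omega)$ and the stability bound $\norm{\psi}{2(p-i)-1}\le C\,\snorm{u-u_h}{2i+1}$.

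Next I would establish the identity $\snorm{u-u_h}{2i+1}^{2}=a(\psi,u-u_h)$. Because $\Delta^{i}(u-u_h)\in H^{1}_{0}(\Omega)$ (its differential order is $2i\le p-1$), the definition of the distributional divergence gives $(\mathbf{z},\mathbf{z})=(\mathbf{z},\nabla\Delta^{i}(u-u_h))=\langle-\mathrm{div}\,\mathbf{z},\Delta^{i}(u-u_h)\rangle=\langle\Delta^{p-i}\psi,\Delta^{i}(u-u_h)\rangle$. Integrating by parts $\ell-i$ times to transfer Laplacians from $\psi$ onto $u-u_h$, every boundary contribution vanishes, since each such term pairs a trace of $\psi$ against a factor of $u-u_h$ of differential order at most $p-1$, and $u-u_h\in H^{p}_{0}(\Omega)$; hence $\langle\Delta^{p-i}\psi,\Delta^{i}(u-u_h)\rangle=(\Delta^{\ell}\psi,\Delta^{\ell}(u-u_h))=a(\psi,u-u_h)$, which is well defined because $\Delta^{\ell}\psi\in L^{2}(\Omega)$ by the regularity just obtained. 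Then I would apply Lemma~\ref{lemma:poly:aux:pblm:psi:u-uh} with $\beta=2(p-i)-1$: the hypothesis $\beta>p$ holds because $2(p-i)-1>p$ is equivalent to $p>2i+1$, which is true since $p=2\ell$ and $i\le\ell-1$; moreover $\beta\le 2p-1\le r$, so $\min(\beta,r+1)=\beta$ and the exponent in~\eqref{eq:poly:aux:pblm:psi:u-uh} equals $(r-(p-1))+\beta-p=r-2i=(r+1)-(2i+1)$. Combining $a(\psi,u-u_h)\le C\,h^{(r+1)-(2i+1)}\big(\snorm{u}{r+p+1}+\snorm{f}{r-p+1}\big)\norm{\psi}{2(p-i)-1}$ with the stability bound for $\psi$ and dividing by $\snorm{u-u_h}{2i+1}$ (the case $\snorm{u-u_h}{2i+1}=0$ being trivial) gives the assertion.

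The step I expect to be the main obstacle is not any single estimate but the careful choice of the dual problem and the rigor of the identity $\snorm{u-u_h}{2i+1}^{2}=a(\psi,u-u_h)$: since $u_h$ is globally only in $H^{p}(\Omega)$, Laplacian powers beyond $\Delta^{\ell}$ are not globally defined on $u_h$, so one must integrate by parts only as far as the $H^{p}_{0}$-regularity of $u-u_h$ and the $H^{2(p-i)-1}$-regularity of $\psi$ permit, and in particular keep the dual data as the $H^{-1}$ object $-\mathrm{div}\,\mathbf{z}$ rather than rewriting it as $-\Delta^{i+1}(u-u_h)$ with $L^{2}$ data (which would raise $\beta$ to $2(p-i)$ and spoil the exponent). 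Everything else reduces to the bookkeeping already performed in the even--even case.
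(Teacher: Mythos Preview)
Your proposal is correct and follows essentially the same duality argument as the paper: you pose the dual problem $\Delta^{p-i}\psi=-\mathrm{div}\,\nabla\Delta^{i}(u-u_h)$, which is exactly the paper's $-\Delta^{p-i}\psi=\Delta^{i+1}(u-u_h)$ rewritten to make the $H^{-1}$ nature of the data explicit, invoke the regularity shift with $m=1$ to obtain $\psi\in H^{2(p-i)-1}$ with $\norm{\psi}{2(p-i)-1}\le C\snorm{u-u_h}{2i+1}$, establish the identity $\snorm{u-u_h}{2i+1}^{2}=a(\psi,u-u_h)$ by integration by parts, and conclude via Lemma~\ref{lemma:poly:aux:pblm:psi:u-uh} with $\beta=2(p-i)-1$. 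Your verification that $\beta>p$ and your exponent bookkeeping are accurate; in fact the paper's own proof contains a slip (it writes ``use $\beta=2(p-i)$'' where $\beta=2(p-i)-1$ is what is actually needed and used), so your version is the cleaner one.
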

\begin{proof}
  For $i=0,\ldots,\ell-1$, let $\psi\in H^{2(p-i)-1}(\Omega)\cap H^{{ p-i}}_{0}(\Omega)$ be the
  solution of the problem:
  \begin{align}
    -\Delta^{p-i}\psi = \Delta^{i+1}(u-u_h) \in H^{-1}(\Omega),
    \label{eq:aux:pblm:even-p:odd-norms}
  \end{align}
  with the stability property
  \begin{align}
    \norm{\psi}{2(p-i)-1}\leq C\snorm{u-u_h}{2i+1}.
    \label{eq:poly:stab:even-p:odd-norms}
  \end{align}
  We use~\eqref{eq:aux:pblm:even-p:odd-norms} and integrate by parts
  to obtain:
  \begin{align*}
    \snorm{u-u_h}{2i+1}^2 
    &= \norm{\nabla\Delta^{i}(u-u_h)}{0}^2 
    = \big( \nabla\Delta^{i}(u-u_h),\,\nabla\Delta^{i}(u-u_h) \big)\\[0.5em]
    &= \big( \nabla\Delta^{i+1}(u-u_h),\,\nabla\Delta^{i}(u-u_h) \big)
    = \big( \Delta^{p-i}\psi,\,\Delta^{i}(u-u_h) \big)
    = \big( \Delta^{\ell}\psi,\,\Delta^{\ell}(u-u_h) \big)\\[0.5em]
    &= a(\psi,u-u_h)
  \end{align*}
 { where we employed the fact that 
  $\snorm{v}{2i+1} = \norm{\nabla\Delta^{i}v}{0} $ for any $v\in H^p_0(\Omega)$.}  
  
  The assertion of the theorem follows from an application of
  Lemma~\ref{lemma:poly:aux:pblm:psi:u-uh} (use $\beta=2(p-i)$  together with $r\geq 2p-1$) and
  the stability property~\eqref{eq:poly:stab:even-p:odd-norms}.
%  \footnote{
%    \MarcoM{
%      $r-(p-1) + min(p,2p-2i-p-1) = r-p+1 + min(p,p-2i) = r-p+1+p-2i-1 = r-2i = (r+1)-(2i+1) $
%    }
%  }
\end{proof}
  
\begin{theorem}[Odd $p$, even norms]
  Let $u$ be the solution of the variational
  problem~\eqref{eq:poly:pblm:wp} and \pa{let}  $u_h$ \pa{be}  the solution of the
  \pa{Virtual Element} method~\eqref{eq:poly:VEM}.
  Then, there exists a positive constant $C$ independent of $h$ such
  that
  \begin{align}
    \snorm{u-u_h}{2i}
    \leq C h^{ (r+1)-2i }\Big(\snorm{u}{r+{ p}+1}+\snorm{f}{r-(p-1)}\Big),
  \end{align}
    for every integer $i=0,\ldots,\ell-1$.
\end{theorem}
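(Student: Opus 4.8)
The plan is to follow exactly the duality (Aubin--Nitsche) template of the two preceding theorems (even $p$, even/odd norms), the only new ingredient being the extra integration by parts forced by the fact that for odd $p=2\ell+1$ the form $a(\cdot,\cdot)$ is built from $\nabla\Delta^\ell$ rather than from $\Delta^\ell$. Fix $i\in\{0,\ldots,\ell-1\}$ and write $w:=u-u_h$. Since $f\in H^{r-p+1}(\Omega)\subset L^2(\Omega)$ we have $u\in H^{2p}(\Omega)\cap H^p_0(\Omega)$, hence $w\in H^p_0(\Omega)$, and because $2i\le 2\ell-2<p$ also $\Delta^i w\in L^2(\Omega)$. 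I would then introduce the auxiliary polyharmonic problem of order $p-i$: let $\psi\in H^{p-i}_0(\Omega)$ solve $\Delta^{p-i}\psi=-\Delta^i w$. As $\Omega$ is a convex polygon and the datum lies in $L^2(\Omega)$, the regularity result recalled in Section~\ref{sec:polyharmonic} gives $\psi\in H^{2(p-i)}(\Omega)$ together with the stability bound $\norm{\psi}{2(p-i)}\le C\snorm{w}{2i}$.

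Next I would show $\snorm{w}{2i}^2=a(\psi,u-u_h)$. Using the identity $\snorm{v}{2i}=\norm{\Delta^i v}{0}$ valid for $v\in H^p_0(\Omega)$ and the defining equation of $\psi$, one has $\snorm{w}{2i}^2=(\Delta^i w,\Delta^i w)=-(\Delta^{p-i}\psi,\Delta^i w)$. I then integrate by parts, moving $\ell-i$ Laplacians off $\psi$ and onto $w$: each such step produces boundary integrals involving only traces of derivatives of $w$ of order at most $2(\ell-1)+1=p-2$, which vanish because $w\in H^p_0(\Omega)$, and all the manipulations are licit since $\psi\in H^{2(p-i)}(\Omega)$ (note $2(p-i)\ge p+1$). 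This reduces the expression to $-(\Delta^{\ell+1}\psi,\Delta^\ell w)$. One further integration by parts (Green's first identity), using that $\Delta^\ell w$ is an order-$(p-1)$ derivative combination whose trace vanishes for $w\in H^p_0(\Omega)$, turns this into $\int_\Omega\nabla\Delta^\ell\psi\cdot\nabla\Delta^\ell w\,dx=a(\psi,w)$, by the very definition of $a(\cdot,\cdot)$ for $p=2\ell+1$. Hence $\snorm{w}{2i}^2=a(\psi,u-u_h)$.

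It then remains to invoke Lemma~\ref{lemma:poly:aux:pblm:psi:u-uh} with $\beta=2(p-i)$: the hypothesis $\beta>p$ holds because $i\le\ell-1$ forces $2(p-i)\ge 2\ell+4>p$, and $r\ge 2p-1$ gives $2(p-i)\le 2p\le r+1$, so $\min(\beta,r+1)=\beta$. The lemma yields $a(\psi,u-u_h)\le C\,h^{(r-(p-1))+(\beta-p)}\big(\snorm{u}{r+p+1}+\snorm{f}{r-(p-1)}\big)\norm{\psi}{\beta}$; substituting $\beta-p=p-2i$, using $\norm{\psi}{\beta}\le C\snorm{w}{2i}$, and dividing by $\snorm{w}{2i}$ gives $\snorm{u-u_h}{2i}\le C\,h^{(r+1)-2i}\big(\snorm{u}{r+p+1}+\snorm{f}{r-(p-1)}\big)$, the claimed estimate. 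The delicate point is the integration-by-parts bookkeeping: one must check that every boundary term genuinely involves only derivatives of $u-u_h$ of order $\le p-1$ — in particular the extra gradient-type integration by parts that is absent in the even-$p$ case — and that the regularity $H^{2(p-i)}$ of the dual solution supplied by the elliptic estimate on the convex polygon suffices both for these manipulations and for the hypotheses of Lemma~\ref{lemma:poly:aux:pblm:psi:u-uh} (notably for $\psi$ to admit a well-defined virtual element interpolant).
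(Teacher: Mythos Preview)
Your proof is correct and follows essentially the same duality argument as the paper: the same auxiliary polyharmonic problem $-\Delta^{p-i}\psi=\Delta^i(u-u_h)$ with $\psi\in H^{2(p-i)}(\Omega)\cap H^{p-i}_0(\Omega)$, the same chain of integrations by parts leading to $\snorm{u-u_h}{2i}^2=a(\psi,u-u_h)$ (including the extra step $(-\Delta^{\ell+1}\psi,\Delta^\ell w)=(\nabla\Delta^\ell\psi,\nabla\Delta^\ell w)$ specific to odd $p$), and the same application of Lemma~\ref{lemma:poly:aux:pblm:psi:u-uh} with $\beta=2(p-i)$. Your write-up is in fact more careful than the paper's in justifying why the boundary terms vanish and in verifying the hypotheses $\beta>p$ and $\min(\beta,r+1)=\beta$ of the lemma.
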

\begin{proof}
  For $i=0,\ldots,\ell$, let $\psi\in H^{2(p-i)}(\Omega)\cap
  H^{{ p-i}}_{0}(\Omega)$ be the solution of the problem
  \begin{align}
    -\Delta^{p-i}\psi = \Delta^{i}(u-u_h) \in L^{2}(\Omega)
    \label{eq:aux:pblm:odd-p:even-norms}
  \end{align}
  with the stability property
  \begin{align}
    \norm{\psi}{2(p-i)}\leq C\snorm{u-u_h}{2i}.
    \label{eq:poly:stab:odd-p:even-norms}
  \end{align}
  We use~\eqref{eq:aux:pblm:odd-p:even-norms} and integrate by parts
  to obtain:
  \begin{align*}
    \snorm{u-u_h}{2i}^2 
    &= \norm{\Delta^{i}(u-u_h)}{0}^2 
    = \big( \Delta^{i}(u-u_h),\,\Delta^{i}(u-u_h) \big)\\[0.5em]
    &= \big( -\Delta^{p-i}\psi,\,\Delta^{i}(u-u_h) \big)
    = \big( -\Delta^{\ell+1}\psi,\,\Delta^{\ell}(u-u_h) \big)\\[0.5em]
    &= \big( \nabla\Delta^{\ell}\psi,\,\nabla\Delta^{\ell}(u-u_h) \big)
    = a(\psi,u-u_h).
  \end{align*}
  The assertion of the theorem follows from an application of
  Lemma~\ref{lemma:poly:aux:pblm:psi:u-uh} (use $\beta=2(p-i)$  together with $r\geq 2p-1$) and
  the stability property~\eqref{eq:poly:stab:odd-p:even-norms}.
\end{proof}
  
\begin{theorem}[Odd $p$, odd norms]
  Let $u$ be the solution of the variational
  problem~\eqref{eq:poly:pblm:wp} and \pa{let}  $u_h$ \pa{be}  the solution of the
  \pa{Virtual Element} method~\eqref{eq:poly:VEM}.
  Then, there exists a positive constant $C$ independent of $h$ such
  that
  \begin{align}
    \snorm{u-u_h}{2i+1}
    \leq C h^{ (r+1)-(2i+1) }\Big(\snorm{u}{r+{ p} + 1}+\snorm{f}{r-(p-1)}\Big),
  \end{align}
  for every integer $i=0,\ldots,\ell-1$.
\end{theorem}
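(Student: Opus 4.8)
The plan is to run a duality argument entirely parallel to the proof of the ``Even $p$, odd norms'' theorem, now with $p=2\ell+1$ and $a(u,v)=\int_{\Omega}\nabla\Delta^{\ell}u\cdot\nabla\Delta^{\ell}v\,dx$. Fix $i\in\{0,\dots,\ell-1\}$ and write $w=u-u_h\in H^{p}_{0}(\Omega)$. The first step is to introduce the adjoint polyharmonic problem: let $\psi\in H^{p-i}_{0}(\Omega)$ be the solution of
\begin{align*}
  \Delta^{p-i}\psi = \Delta^{i+1}w \in H^{-1}(\Omega).
\end{align*}
Since $\Delta^{i+1}w=\operatorname{div}\nabla\Delta^{i}w$, the datum satisfies $\norm{\Delta^{i+1}w}{-1}\le\snorm{w}{2i+1}$, and, $\Omega$ being a convex polygon, elliptic regularity (cf.\ \cite{Gazzola-book}) yields $\psi\in H^{2(p-i)-1}(\Omega)\cap H^{p-i}_{0}(\Omega)$ together with the stability bound $\norm{\psi}{2(p-i)-1}\le C\,\snorm{w}{2i+1}$. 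Note that $p-i\ge\ell+1\ge2$, so $\Delta^{p-i}$ is a genuine polyharmonic operator and the problem is well posed.

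Next I would identify the squared target seminorm with a value of $a(\cdot,\cdot)$. Recalling that $\snorm{v}{2i+1}=\norm{\nabla\Delta^{i}v}{0}$ for $v\in H^{p}_{0}(\Omega)$, I start from $\snorm{w}{2i+1}^{2}=\norm{\nabla\Delta^{i}w}{0}^{2}$ and integrate by parts once; the boundary term drops because $\Delta^{i}w$ has zero trace on $\Gamma$ (as $2i\le2\ell-2<p-1$), giving $\snorm{w}{2i+1}^{2}=-(\Delta^{i+1}w,\Delta^{i}w)$. Inserting the adjoint equation turns the right-hand side into $(-\Delta^{p-i}\psi,\Delta^{i}w)$, and $\ell-i$ further integrations by parts move Laplacians from $\psi$ onto $w$ until one reaches $(-\Delta^{\ell+1}\psi,\Delta^{\ell}w)=(\nabla\Delta^{\ell}\psi,\nabla\Delta^{\ell}w)=a(\psi,w)$; every boundary contribution vanishes because the derivatives of $w$ of order $\le p-1=2\ell$ and those of $\psi$ of order $\le p-i-1$ vanish on $\Gamma$. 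Hence $\snorm{u-u_h}{2i+1}^{2}=a(\psi,u-u_h)$.

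It then remains to apply Lemma~\ref{lemma:poly:aux:pblm:psi:u-uh} with $\beta=2(p-i)-1$. This is admissible since $\beta-p=2(\ell-i)>0$; moreover $r\ge2p-1$ forces $r+1\ge2p>2(p-i)-1$, so $\min(\beta,r+1)=\beta$ and the exponent $(r-(p-1))+\beta-p$ produced by the lemma simplifies to $(r+1)-(2i+1)$. Combining the lemma with the stability bound $\norm{\psi}{2(p-i)-1}\le C\,\snorm{u-u_h}{2i+1}$ gives
\begin{align*}
  \snorm{u-u_h}{2i+1}^{2}=a(\psi,u-u_h)\le C\,h^{(r+1)-(2i+1)}\big(\snorm{u}{r+p+1}+\snorm{f}{r-p+1}\big)\,\snorm{u-u_h}{2i+1},
\end{align*}
and dividing by $\snorm{u-u_h}{2i+1}$ yields the assertion.

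The hard part is not any individual estimate but the correct bookkeeping of the duality argument. The repeated integration by parts must be carried out keeping precise track of which order of derivative has zero trace on $\Gamma$ (this is exactly where $w\in H^{p}_{0}$ and $\psi\in H^{p-i}_{0}$ enter); and, because the norm index $2i+1$ is odd, the identity $\snorm{w}{2i+1}^{2}=-(\Delta^{i+1}w,\Delta^{i}w)$ already contains one integration by parts, so the adjoint problem must be taken here as $\Delta^{p-i}\psi=\Delta^{i+1}(u-u_h)$ --- without the minus sign appearing in the ``Odd $p$, even norms'' proof --- for the signs to match. Equally important is measuring the datum $\Delta^{i+1}w$ in $H^{-1}(\Omega)$ rather than in $L^{2}(\Omega)$: this is what makes the stability constant of the adjoint problem scale like $\snorm{u-u_h}{2i+1}$, so that it can be absorbed on the left-hand side, and it simultaneously fixes $\beta=2(p-i)-1$, the value that delivers the advertised rate $h^{(r+1)-(2i+1)}$.
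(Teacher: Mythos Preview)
Your proposal is correct and follows essentially the same duality argument as the paper: introduce the auxiliary polyharmonic problem for $\psi$ with right-hand side $\Delta^{i+1}(u-u_h)\in H^{-1}(\Omega)$, invoke the regularity/stability bound $\norm{\psi}{2(p-i)-1}\le C\snorm{u-u_h}{2i+1}$, reduce $\snorm{u-u_h}{2i+1}^{2}$ to $a(\psi,u-u_h)$ via repeated integration by parts, and then apply Lemma~\ref{lemma:poly:aux:pblm:psi:u-uh} with $\beta=2(p-i)-1$. The only cosmetic difference is the sign convention in the auxiliary problem (the paper writes $-\Delta^{p-i}\psi=\Delta^{i+1}(u-u_h)$), which does not affect the argument; if anything, your discussion of why the boundary terms vanish and how the exponent simplifies is more explicit than the paper's.
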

\begin{proof}
  For $i=0,\ldots,\ell$, let $\psi\in H^{2(p-i)-1}(\Omega)\cap
  H^{p}_{0}(\Omega)$ be the solution of the problem:
  \begin{align}
    -\Delta^{p-i}\psi = \Delta^{i+1}(u-u_h) \in H^{-1}(\Omega),
    \label{eq:aux:pblm:odd-p:even-norms}
  \end{align}
  with the stability property
  \begin{align}
    \norm{\psi}{2(p-i)-1}\leq C\snorm{u-u_h}{2i+1}.
    \label{eq:poly:stab:odd-p:odd-norms}
  \end{align}
  We use~\eqref{eq:aux:pblm:odd-p:even-norms} and integrate by parts
  to obtain:
  \begin{align*}
    \snorm{u-u_h}{2i+1}^2 
    &= \norm{\nabla\Delta^{i}(u-u_h)}{0}^2 
    = \big( \nabla\Delta^{i}(u-u_h),\,\nabla\Delta^{i}(u-u_h) \big)\\[0.5em]
    &= \big( \nabla\Delta^{i+1}(u-u_h),\,\nabla\Delta^{i}(u-u_h) \big)
    = \big( -\Delta^{p-i}\psi,\,\Delta^{i}(u-u_h) \big)
    = \big( -\Delta^{\ell+1}\psi,\,\Delta^{\ell}(u-u_h) \big)\\[0.5em]
    &= \big( \nabla\Delta^{\ell}\psi,\,\nabla\Delta^{\ell}(u-u_h) \big)
    = a(\psi,u-u_h)
  \end{align*} 
where again we employed the fact that 
  $\snorm{v}{2i+1} = \norm{\nabla\Delta^{i}v}{0} $ for any $v\in H^p_0(\Omega)$.
  The assertion of the theorem follows from an application of
  Lemma~\ref{lemma:poly:aux:pblm:psi:u-uh} (use $\beta=2(p-i)-1$  together with $r\geq 2p-1$) and
  the stability property~\eqref{eq:poly:stab:odd-p:odd-norms}.
\end{proof}

\begin{comment}
\section{Implementations}
\label{sec:implementation}
\end{comment}

\section{Conclusions}
\label{sec:conclusions}
In this paper, we developed the conforming  \pa{Virtual Element}
discretization of arbitrary order for polyharmonic problems, which
requires the discretization of operator like $\Delta^pu$ for integer
$p\geq 1$.
To this end, we introduced local and global  \pa{Virtual Element}
approximation spaces together with suitable discrete bilinear forms 
for odd and even $p$. 
The convergence of the method has been proved and optimal error
estimates derived in suitable norms.
The numerical implementation of the current method \pa{deserves a careful study}
because of the severe ill-conditioning of the polyharmonic
differential operator and the need of high order polynomials, whose
degree should be at least $5$ in the simplest case $p=3$.
For this reason, the implementation of this conforming VEM \pa{is under} investigation and will be addressed in a forthcoming
publication.

\section*{Acknowledgements}
\pa{The work of the first author has been partially funded by SIR Startin grant n. RBSI14VT0S funded by MIUR. The first and last authors has been partially supported by INdAM-GNCS.}
The work of the second author was partially supported by the
Laboratory Directed Research and Development Program (LDRD),
U.S. Department of Energy Office of Science, Office of Fusion Energy
Sciences, and the DOE Office of Science Advanced Scientific Computing
Research (ASCR) Program in Applied Mathematics Research, under the
auspices of the National Nuclear Security Administration of the
U.S. Department of Energy by Los Alamos National Laboratory, operated
by Los Alamos National Security LLC under contract DE-AC52-06NA25396. This article is assigned the number LA-UR-18-29151.

%\bibliographystyle{abbrv}
%\bibliography{biblio}

\begin{thebibliography}{10}

\bibitem{AhmadAlsaediBrezziMariniRusso_2013}
B.~Ahmad, A.~Alsaedi, F.~Brezzi, L.~Marini, and A.~Russo.
\newblock {E}quivalent {p}rojectors for {V}irtual {E}lement {M}ethod.
\newblock {\em Computers \& Mathematics with Applications}, 66(3):376--391,
  2013.

\bibitem{AntoniettiBeiraoMoraVerani_2014}
P.~F. Antonietti, L.~Beir{\~a}o Da~Veiga, D.~Mora, and M.~Verani.
\newblock A stream {V}irtual {E}lement formulation of the {S}tokes problem on
  polygonal meshes.
\newblock {\em SIAM J. Numer. Anal.}, 52(1):386--404, 2014.

\bibitem{AntoniettiBeiraoScacchiVerani_2016}
P.~F. Antonietti, L.~Beir{\~a}o Da~Veiga, S.~Scacchi, and M.~Verani.
\newblock A ${C}^1$ {V}irtual {E}lement {M}ethod for the {C}ahn-{H}illiard
  equation with polygonal meshes.
\newblock {\em SIAM J. Numer. Anal.}, 54(1):34--56, 2016.

\bibitem{AntoniettiBigoniVerani_2013}
P.~F. Antonietti, N.~Bigoni, and M.~Verani.
\newblock Mimetic discretizations of elliptic control problems.
\newblock {\em Journal of Scientific Computing}, 56(1):14--27, 2013.

\bibitem{AntBreMar09}
P.~F. Antonietti, F.~Brezzi, and L.~D. Marini.
\newblock Bubble stabilization of {D}iscontinuous {G}alerkin {M}ethods.
\newblock {\em Comput. Methods Appl. Mech. Engrg.}, 198(21-26):1651--1659,
  2009.

\bibitem{AntoniettiBruggiScacchiVerani_2017}
P.~F. Antonietti, M.~Bruggi, S.~Scacchi, and M.~Verani.
\newblock On the {V}irtual {E}lement {M}ethod for topology optimization on
  polygonal meshes: A numerical study.
\newblock {\em Computers and Mathematics with Applications}, 74(5):1091--1109,
  2017.

\bibitem{AntoniettiHoustonet_al_2016}
P.~F. Antonietti, A.~Cangiani, J.~Collis, Z.~Dong, E.~H. Georgoulis, S.~Giani,
  and P.~Houston.
\newblock Review of {D}iscontinuous {G}alerkin finite element methods for
  partial differential equations on complicated domains.
\newblock G.R. Barrenechea et al. (eds.), Building Bridges: Connections and
  Challenges in Modern Approaches to Numerical Partial Differential Equations,
  \emph{Lecture Notes in Computational Science and Engineering} 114:279--307,
  2016.

\bibitem{AntoniettiFormaggiaScottiVeraniVerzotti_2016}
P.~F. Antonietti, L.~Formaggia, A.~Scotti, M.~Verani, and N.~Verzotti.
\newblock Mimetic finite difference approximation of flows in fractured porous
  media.
\newblock {\em M2AN Math. Model. Numer. Anal.}, 50(3):809--832, 2016.

\bibitem{AntoniettiManziniVerani_2018}
P.~F. Antonietti, G.~Manzini, and M.~Verani.
\newblock The fully nonconforming {V}irtual {E}lement method for biharmonic
  problems.
\newblock {\em Mathematical Models \& Methods in Applied Sciences}, 28(2),
  2018.

\bibitem{AntoniettiMascottoVerani_2018}
P.~F. Antonietti, L.~Mascotto, and M.~Verani.
\newblock A multigrid algorithm for the $p$--version of the {V}irtual {E}lement
  method.
\newblock {\em M2AN Math. Model. Numer. Anal.}, 52(1):337--364, 2018.

\bibitem{AntoniettiPennesiHouston_2018}
P.~F. Antonietti, G.~Pennesi, and P.~Houston.
\newblock Fast numerical integration on polytopic meshes with applications to
  discontinuous {G}alerkin finite element methods.
\newblock \emph{J. Sci. Comput.}, 2018.

\bibitem{AyusoLipnikovManzini_2016}
B.~Ayuso~de Dios, K.~Lipnikov, and G.~Manzini.
\newblock The nonconforming {V}irtual {E}lement {M}ethod.
\newblock {\em ESAIM Math. Model. Numer. Anal.}, 50(3):879--904, 2016.

\bibitem{Barrett2004}
J.~W. Barrett, S.~Langdon, and R.~N\"urnberg.
\newblock Finite element approximation of a sixth order nonlinear degenerate
  parabolic equation.
\newblock {\em Numer. Math.}, 96(3):401--434, 2004.

\bibitem{BeiraodaVeigaBrezziCangianiManziniMariniRusso_2013}
L.~Beir\~ao~da Veiga, F.~Brezzi, A.~Cangiani, G.~Manzini, L.~D. Marini, and
  A.~Russo.
\newblock Basic principles of virtual element methods.
\newblock {\em Mathematical Models \& Methods in Applied Sciences},
  23(1):119--214, 2013.

\bibitem{BeiraodaVeigaManzini_2014}
L.~Beir\~ao~da Veiga and G.~Manzini.
\newblock A virtual element method with arbitrary regularity.
\newblock {\em IMA Journal on Numerical Analysis}, 34(2):782--799, 2014.
\newblock DOI: 10.1093/imanum/drt018, (first published online 2013).

\bibitem{BeiraoManziniLipnikov_2014}
L.~Beir{\~a}o~da Veiga, K.~Lipnikov, and G.~Manzini.
\newblock {\em The {M}imetic {F}inite {D}ifference {M}ethod for elliptic
  problems}, volume~11 of {\em MS\&A. Modeling, Simulation and Applications}.
\newblock Springer, Cham, 2014.

\bibitem{BenedettoBerronePieracciniScialo_2014}
M.~F. Benedetto, S.~Berrone, S.~Pieraccini, and S.~Scial\`o.
\newblock The {V}irtual {E}lement {M}ethod for discrete fracture network
  simulations.
\newblock {\em Comput. Methods Appl. Mech. Engrg.}, 280:135--156, 2014.

\bibitem{Bramble1985}
J.~H. Bramble and R.~S. Falk.
\newblock A mixed-{L}agrange multiplier finite element method for the
  polyharmonic equation.
\newblock {\em RAIRO Mod\'el. Math. Anal. Num\'er.}, 19(4):519--557, 1985.

\bibitem{Brenner-Scott:94}
S.~Brenner and L.~Scott.
\newblock {\em The Mathematical Theory of Finite Element Methods}.
\newblock Springer-Verlag, Berlin/Heidelberg, 1994.

\bibitem{Brezzi-Buffa-Lipnikov:2009}
F.~Brezzi, A.~Buffa, and K.~Lipnikov.
\newblock Mimetic finite differences for elliptic problems.
\newblock {\em ESAIM: Mathematical Modelling and Numerical Analysis},
  43(2):277--295, 2009.

\bibitem{BrezziFalkMarini_2014}
F.~Brezzi, R.~S. Falk, and L.~D. Marini.
\newblock Basic principles of mixed {V}irtual {E}lement {M}ethods.
\newblock {\em ESAIM Math. Model. Numer. Anal.}, 48(4):1227--1240, 2014.

\bibitem{BreLipSha05}
F.~Brezzi, K.~Lipnikov, and M.~Shashkov.
\newblock Convergence of the mimetic finite difference method for diffusion
  problems on polyhedral meshes.
\newblock {\em SIAM J. Numer. Anal.}, 43(5):1872--1896 (electronic), 2005.

\bibitem{BrezziMarini_2013}
F.~Brezzi and L.~D. Marini.
\newblock Virtual {E}lement {M}ethods for plate bending problems.
\newblock {\em Comput. Methods Appl. Mech. Engrg.}, 253:455--462, 2013.

\bibitem{CangianiDongGeorgoulisHouston_2016}
A.~Cangiani, Z.~Dong, E.~Georgoulis, and P.~Houston.
\newblock $hp$-{v}ersion {D}iscontinuous {G}alerkin {M}ethods for
  advection-diffusion-reaction problems on polytopic meshes.
\newblock {\em M2AN Math. Model. Numer. Anal.}, 50(3):699--725, 2016.

\bibitem{CangianiGeorgoulisPryerSutton_2017}
A.~Cangiani, E.~Georgoulis, T.~Pryer, and O.~Sutton.
\newblock A posteriori error estimates for the virtual element method.
\newblock {\em Numerische Mathematik}, 137(4):857--893, 2017.

\bibitem{Canetal14}
A.~Cangiani, E.~H. Georgoulis, and P.~Houston.
\newblock $hp$-{V}ersion discontinuous {G}alerkin methods on polygonal and
  polyhedral meshes.
\newblock {\em Math. Models Methods Appl. Sci.}, 24(10):2009--2041, 2014.

\bibitem{CangianiGyryaManzini_2016}
A.~Cangiani, V.~Gyrya, and G.~Manzini.
\newblock The nonconforming {V}irtual {E}lement {M}ethod for the {S}tokes
  equations.
\newblock {\em SIAM J. Numer. Anal.}, 54(6):3411--3435, 2016.

\bibitem{CangianiGyryaManziniSutton_2017}
A.~Cangiani, V.~Gyrya, G.~Manzini, and O.~Sutton.
\newblock Chapter 14: Virtual element methods for elliptic problems on
  polygonal meshes.
\newblock In K.~Hormann and N.~Sukumar, editors, {\em Generalized Barycentric
  Coordinates in Computer Graphics and Computational Mechanics}, pages 1--20.
  CRC Press, Taylor \& Francis Group, 2017.

\bibitem{CangianiManziniRussoSukumar_2015}
A.~Cangiani, G.~Manzini, A.~Russo, and N.~Sukumar.
\newblock Hourglass stabilization of the virtual element method.
\newblock {\em International Journal on Numerical Methods in Engineering},
  102(3-4):404--436, 2015.

\bibitem{CangianiManziniSutton_2017}
A.~Cangiani, G.~Manzini, and O.~Sutton.
\newblock Conforming and nonconforming virtual element methods for elliptic
  problems.
\newblock {\em IMA Journal of Numerical Analysis}, 37(3):1317--1354, 2017.

\bibitem{CockburnDongGuzman_2008}
B.~Cockburn, B.~Dong, and J.~Guzm\'an.
\newblock A superconvergent {LDG}-{H}ybridizable {G}alerkin {M}ethod for
  second-order elliptic problems.
\newblock {\em Math. Comp.}, 77(264):1887--1916, 2008.

\bibitem{DiPietroErnLemaire_2014}
D.~A. Di~Pietro, A.~Ern, and S.~Lemaire.
\newblock An arbitrary-order and compact-stencil discretization of diffusion on
  general meshes based on local reconstruction operators.
\newblock {\em Comput. Methods Appl. Math.}, 14(4):461--472, 2014.

\bibitem{Droniou:2014}
J.~Droniou.
\newblock Finite volume schemes for diffusion equations: {I}ntroduction to and
  review of modern methods.
\newblock {\em Mathematical Models and Methods in Applied Sciences},
  24(08):1575--1619, 2014.

\bibitem{EymardGuichardHerbin_2012}
R.~Eymard, C.~Guichard, and R.~Herbin.
\newblock Small-stencil 3{D} schemes for diffusive flows in porous media.
\newblock {\em ESAIM Math. Model. Numer. Anal.}, 46(2):265--290, 2012.

\bibitem{SVEMbasic}
M.~Frittelli and I.~Sgura.
\newblock Virtual {E}lement {M}ethod for the {L}aplace {B}eltrami equation on
  surfaces, 2018.
\newblock DOI=10.1051/m2an/2017040 (arXiv:1612.02369v1).

\bibitem{Gallistl2017}
D.~Gallistl.
\newblock Stable splitting of polyharmonic operators by generalized {S}tokes
  systems.
\newblock {\em Math. Comp.}, 86(308):2555--2577, 2017.

\bibitem{GardiniManziniVacca_2018}
F.~Gardini, G.~Manzini, and G.~Vacca.
\newblock The nonconforming virtual element method for eigenvalue problems,
  2018.
\newblock arXiv:1802.02942v1.

\bibitem{GardiniVacca_2017}
F.~Gardini and G.~Vacca.
\newblock Virtual element method for second order elliptic eigenvalue problems.
\newblock {\em IMA Journal on Numerical Analysis}, 2017.
\newblock Preprint, ar{X}iv:1610.03675.

\bibitem{Gazzola-book}
F.~Gazzola, H.-C. Grunau, and G.~Sweers.
\newblock {\em Polyharmonic boundary value problems}, volume 1991 of {\em
  Lecture Notes in Mathematics}.
\newblock Springer-Verlag, Berlin, 2010.
\newblock Positivity preserving and nonlinear higher order elliptic equations
  in bounded domains.

\bibitem{Gudi2011}
T.~Gudi and M.~Neilan.
\newblock An interior penalty method for a sixth-order elliptic equation.
\newblock {\em IMA J. Numer. Anal.}, 31(4):1734--1753, 2011.

\bibitem{Miranville2015}
A.~Miranville.
\newblock Sixth-order {C}ahn-{H}illiard systems with dynamic boundary
  conditions.
\newblock {\em Math. Methods Appl. Sci.}, 38(6):1127--1145, 2015.

\bibitem{Schedensack2016}
M.~Schedensack.
\newblock A new discretization for {$m$}th-{L}aplace equations with arbitrary
  polynomial degrees.
\newblock {\em SIAM J. Numer. Anal.}, 54(4):2138--2162, 2016.

\bibitem{SukumarTabarraei_2004}
N.~Sukumar and A.~Tabarraei.
\newblock Conforming {P}olygonal {F}inite {E}lements.
\newblock {\em Internat. J. Numer. Methods Engrg.}, 61(12):2045--2066, 2004.

\bibitem{Vacca:2018}
G.~Vacca.
\newblock An ${H}^1$-conforming {V}irtual {E}lement for {D}arcy and {B}rinkman
  equations.
\newblock {\em Math. Models Methods Appl. Sci.}, 28(1):159--194, 2018.

\bibitem{Wang2013}
M.~Wang and J.~Xu.
\newblock Minimal finite element spaces for {$2m$}-th-order partial
  differential equations in {$R^n$}.
\newblock {\em Math. Comp.}, 82(281):25--43, 2013.

\bibitem{Lowengrub1}
S.~Wise, J.~Lowengrub, J.~Kim, and W.~Johnson.
\newblock Efficient phase-field simulation of quantum dot formation in a
  strained heteroepitaxial film.
\newblock {\em Superlattices and Microstructures}, 36(1-3):293--304, 7 2004.

\bibitem{Lowengrub2}
S.~M. Wise, J.~S. Lowengrub, J.~S. Kim, K.~Thornton, P.~W. Voorhees, and W.~C.
  Johnson.
\newblock Quantum dot formation on a strain-patterned epitaxial thin film.
\newblock {\em Applied Physics Letters}, 87(13):133102, 2005.

\bibitem{ZhaoChenZhang_2016}
J.~Zhao, S.~Chen, and B.~Zhang.
\newblock The nonconforming {V}irtual {E}lement {M}ethod for plate bending
  problems.
\newblock {\em Math. Models Methods Appl. Sci.}, 26(9):1671--1687, 2016.

\end{thebibliography}

\end{document}